%

\documentclass[12pt]{article}

%
\usepackage{lineno}
\usepackage[T1]{fontenc}
\usepackage{graphicx}
\usepackage{subfigure}
\usepackage{amsthm,mathrsfs}
\usepackage{amsmath}
\usepackage{amsfonts}
\usepackage{bbm}
\usepackage{float}
\usepackage{url}
\usepackage{color}
\usepackage{bm,bbm}
\usepackage{times}
\usepackage{algorithm,algpseudocode}
\usepackage{algcompatible}
\usepackage{caption,dsfont}
\usepackage{booktabs}
\usepackage{graphicx}
\usepackage{enumerate}
\usepackage{natbib}
\usepackage{hyperref}
\usepackage{psfrag,scalefnt}
\usepackage{setspace,hyperref}

\def\orcid#1{\kern .08em\href{https://orcid.org/#1}{\includegraphics[keepaspectratio,width=0.7em]{orcid.pdf}}}

\newtheorem{theorem}{Theorem}[section]

\newtheorem{corollary}[theorem]{Corollary}
\newtheorem{proposition}[theorem]{Proposition}
\newtheorem{definition}[theorem]{Definition}
\newtheorem{remark}[theorem]{Remark}
\theoremstyle{definition}

\setlength{\topmargin}{-2cm} 
\setlength{\oddsidemargin}{-0.25cm} 
\setlength{\evensidemargin}{-0.25cm} 
\setlength{\textwidth}{170mm} 
\setlength{\textheight}{235mm} 
\setlength{\footskip}{1cm} 

%

\begin{document}
	
%
\title{\bf On some properties of the bimodal normal distribution \\
	and its bivariate version}

\date{\today}
%
%

\author{
{ Roberto Vila$^1$ }, \
{ Helton Saulo$^1$ } \ and \
{Jamer Roldan$^2$ }
\\[0.15cm]
{\normalsize
$^1$Department of Statistics, Universidade de Brasília, 70910-900, Brasília, Brazil}
\\
{\normalsize
$^2$Department of Mathematics, Instituto Federal de Goiás, 72876-601, Goiás, Brazil}
} 
%

\maketitle 
%
\begin{abstract}
In this work, we derive some novel properties of the bimodal normal distribution. Some of its mathematical properties are examined. We provide a formal proof for the bimodality and assess identifiability. We then discuss the maximum likelihood estimates as well as the existence of these estimates, and also some asymptotic properties of the estimator of the parameter that controls the bimodality. A bivariate version of the BN distribution is derived and some characteristics such as covariance and correlation are analyzed. We study stationarity and ergodicity and a triangular array central limit theorem. Finally, a Monte Carlo study is carried out for evaluating the performance of the maximum likelihood estimates.

\end{abstract}

\paragraph{Keywords:}Bimodality; Identifiability; Bivariate distribution; Stationarity; Ergodicity; Central limit theorem.

\maketitle

\section{Introduction}
Bimodal distributions play an important role in the applied statistical literature; see, for example, \cite{eugeneetal:02} and \cite{hassanelbassiouni:16}. The use of mixture-free bimodal distributions is very important as often real-world data are better modeled by these models, and in general, mixtures of distributions may suffer from identifiability problems in the parameter estimation; see \cite{vilaetal:20}. Recently, \cite{gomezdenizetal:21} introduced a family of continuous distributions appropriate to describe the behavior of bimodal data. This family can accommodate any symmetric distribution and for the normal case, the random variable $X$ has the following probability density function (PDF) 
\begin{align}\label{density-family-BN}
 f_{\alpha,\zeta}(x)=\sqrt{2\pi}\,\text{sech}(\zeta\alpha)\phi(\alpha)\phi(x)\text{cosh}[\alpha(x-\zeta)], \quad x\in\mathbb{R},
\end{align}
where $\zeta\in\mathbb{R}$ and $\alpha\in\mathbb{R}$ are shape and location parameters, respectively, $\phi(x)$ is the standard normal PDF, and $\text{sech}(z)=1/\text{cosh}(z)$, with $\text{cosh}(z)=[\exp(z)+\exp(-z)]/2$. The parameter $\zeta$ in \eqref{density-family-BN} controls the skewness and the parameter $\alpha$ is related to the bimodality; see \cite{gomezdenizetal:21}.

In this work, we derive some novel properties of a special case of Equation \eqref{density-family-BN}, more specifically when $\zeta = 0$. Then, we say that a real-valued random variable $X$ has a uni- or bimodal normal (BN) distribution with parameter vector parameter $\boldsymbol{\theta}=(\mu,\sigma,\alpha)$, $\mu\in\mathbb{R}$, $\sigma>0$, $\alpha\in \mathbb{R}$, denoted by $X\sim \text{BN}(\boldsymbol{\theta})$, if its PDF is given by
\begin{align}\label{density-BN}
f(x;\boldsymbol{\theta})
=
\dfrac{1}{\sqrt{2\pi\sigma^2}}\,
\exp\biggl[-{1\over 2}\, {\biggl({x-\mu\over \sigma}\biggr)^2 - {\alpha^2\over 2}}\,\biggr] 
\cosh\biggl[\alpha\biggl({ x-\mu\over \sigma}\biggr)\biggr],
\quad x\in\mathbb{R},
\end{align}
where $\mu$ is a location parameter, $\sigma$ is a scale parameter, and $\alpha$ is a parameter that controls the uni- or bimodality of the distribution. When $\alpha$ approaches 0 (i.e. $\vert\alpha\vert\leq 1$) the distribution becomes unimodal and when $\alpha$ grows (i.e. $\vert\alpha\vert>1$) the bimodality becomes more accentuated. When $\alpha=0$ we have the known normal distribution. For more details, see Theorem \ref{Main Theorem}.

The rest of this paper proceeds as follows. In Section \ref{sec:pre:prop}, we briefly describe some preliminary properties, including the behaviour of the density and hazard functions, median,  moment generating function, mean, variance, among others. In Section \ref{sec:bimodality}, we obtain some results on the bimodality property of the BN distribution, and the stochastic representation and moments are derived in Section \ref{sec:stoch:mom}. In Section \ref{Identifiability}, we study some aspects of identifiability. In Section \ref{sec:asymptot}, we discuss maximum likelihood (ML) estimation, existence of the ML estimates, and some asymptotic properties of the ML estimator (MLE) of $\alpha$. A bivariate version of the BN distribution is derived and some characteristics such as covariance and correlation are analyzed in Section \ref{sec:bivariate}. In Section \ref{sec:stat:ergo}, the concepts of stationarity and ergodicity of a BN random process are studied. Ergodicity is an important ingredient to study functions of the distributional characteristics of the process when we have only one realization. We find out that the BN random process is not stationary. This result allows us to study, in Section \ref{sec:triangular}, the triangular array central limit theorem, which is of vital importance in statistics. In Section \ref{sec:simulation}, we carry out Monte Carlo simulations. Finally, in Section \ref{sec:concluremarks}, we discuss conclusions.

\section{Preliminary properties}\label{sec:pre:prop}
Let $X\sim \text{BN}(\boldsymbol{\theta})$ with PDF $f(x;\boldsymbol{\theta})$ given in \eqref{density-BN}. Then, the behavior of $f(x;\boldsymbol{\theta})$ with $x\to 0$ or $x \to \pm\infty$ is as follows:
\begin{align}\label{limits-density}
\lim_{x\to 0} f(x;\boldsymbol{\theta})
=
\sqrt{2\pi}\, 
\phi_{\mu,\sigma^2}(0) 
\phi(\alpha) 
\cosh\Big({\alpha\mu\over\sigma}\Big)
\quad \text{and} \quad
\lim_{x\to \pm\infty} f(x;\boldsymbol{\theta})=0,
\end{align}
where $\phi_{\mu,\sigma^2}(x)$ is the PDF of the normal distribution with mean $\mu$ and variance $\sigma^2$, and we denote $\phi(x)$ instead $\phi_{0,1}(x)$.

It is verified that the cumulative distribution function (CDF) of $X\sim \text{BN}(\boldsymbol{\theta})$ is given by
\begin{align}\label{CDF-Bigaussian}
F(x;\boldsymbol{\theta})
=
{1\over 4}\, 
\left[2+{\rm erf}\biggl({x-\mu-\alpha\sigma\over \sigma\sqrt{2}}\biggr)
+
{\rm erf}\biggl({x-\mu+\alpha\sigma\over \sigma\sqrt{2} }\biggr)\right],
\end{align}
where ${\rm erf}(x)=2\int_0^{x} \exp(-t^2)\, {\rm d}t/\sqrt{\pi}$ is the error function. Note that $\lim_{\alpha\to 0}F(x;0,1,\alpha)=(1/2)[1+{\rm erf}({x/\sqrt{2}})]=\Phi(x)$, where $\Phi(x)$ is the CDF of the normal distribution.

The hazard function $h(x;\boldsymbol{\theta})=f(x;\boldsymbol{\theta})/[1-F(x;\boldsymbol{\theta})]$ has the following behavior with $x\to 0$ or $x \to \pm\infty$:
\begin{align*}
\lim_{x\to 0} h(x;\boldsymbol{\theta})
=
{4\sqrt{2\pi}\, 
\phi_{\mu,\sigma^2}(0) 
\phi(\alpha) 
\cosh({\alpha\mu/\sigma})
\over 
2- {\rm erf}\big({-\mu-\alpha\sigma\over\sigma\sqrt{2}}\big)
-
{\rm erf}\big({-\mu+\alpha\sigma\over\sigma\sqrt{2} }\big)
},
\quad 
\lim_{x\to -\infty} h(x;\boldsymbol{\theta})=0
\quad \text{and} \quad
\lim_{x\to +\infty} h(x;\boldsymbol{\theta})=+\infty.
\end{align*}
From the above limits it can be concluded that the hazard function is not a decreasing function.

\smallskip 
A routine calculation shows that, if $X\sim \text{BN}(\boldsymbol{\theta})$,
\begin{enumerate}
\item[(P.1)] (Density) The random variable $Z=(X-\mu)/\sigma$, where $\mu\in\mathbb{R}$ and $\sigma>0$, has PDF given by

$f(z;0,1,\alpha)
=
\dfrac{1}{\sqrt{2\pi}}
\exp\bigl(-{z^2 + \alpha^2\over 2}\bigr)
\cosh(\alpha z)$,
\ $z\in\mathbb{R}$.

That is, $Z\sim \text{BN}(0,1,\alpha)$;
\item[(P.2)] If $f$ is a Borel measurable function then

$\mathbb{E}\big[f({X-\mu\over\sigma})\big]=\exp\bigl(-{\alpha^2\over 2}\bigr)\mathbb{E}_{\Phi}[f(Z)\cosh(\alpha Z)]$, $Z\sim N(0,1)$,

where $\mathbb{E}_{\Phi}(\cdot)$ denotes the expectation with respect to distribution function $\Phi$;
\item[(P.3)] (Symmetry) $f(\mu-x;\boldsymbol{\theta})=f(\mu+x;\boldsymbol{\theta})$ for all real numbers $x$;
\item[(P.4)] (Median) The median $m$ satisfies:
${\rm erf}\big({m-\mu-\alpha\sigma\over\sigma\sqrt{2}}\big)={\rm erf}\big({-m+\mu-\alpha\sigma\over \sigma\sqrt{2}}\big)$. Then $m=\mu$;
\item[(P.5)] (Moment generating function) 
$M_X(t)=  
\exp\big(\mu t+{1\over 2}\, \sigma^2 t^2\big) \cosh(\alpha\sigma t)$, $t\in\mathbb{R}$;
\item[(P.6)] (CF) $\phi_X(t)= \exp\big(i\mu t-{1\over 2}\, \sigma^2 t^2\big) \cosh(i\alpha\sigma t)$, $t\in\mathbb{R}$;
\item[(P.7)] (Mean) $\mathbb{E}(X)=\mu$; 
\item[(P.8)] (Variance) ${\rm Var}(X)=\sigma^2(1+\alpha^2)$;
\item[(P.9)] (Skewness) $v=0$. 

That is, the distribution is approximately symmetrical;
\item[(P.10)] (Kurtosis) $\kappa=\alpha^2(\alpha^2+6)+3$;
\item[(P.11)] (Mean absolute deviation) 
${\rm MAD}=
\big[2\phi(\alpha)+\alpha{\rm erf}\big({\alpha\over \sqrt{2}}\big)\big]\sigma$;
\item[(P.12)] (Shannon entropy) $H(X)=\log(\sqrt{2\pi\sigma^2})+{2\alpha^2+1\over 2}-{\exp(-\alpha^2/2)\over 2}\,[\exp(2\alpha^2)+1]$.
\end{enumerate}

\section{Uni- or bimodality of the BN distribution}\label{sec:bimodality}

\begin{theorem}[Uni- or bimodality]\label{Main Theorem}
The PDF of the BN distribution \eqref{density-BN} is unimodal when $\vert\alpha\vert\leq 1$ and is bimodal when $\vert\alpha\vert> 1$.
\end{theorem}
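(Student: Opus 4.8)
The plan is to reduce to the standardized case $Z \sim \text{BN}(0,1,\alpha)$, since by (P.1) the location and scale parameters do not affect the number of modes: $f(x;\boldsymbol{\theta})$ has a mode at $x_0$ iff $g(z) := f(z;0,1,\alpha)$ has a mode at $z_0 = (x_0-\mu)/\sigma$. Moreover, by the symmetry (P.3), $g$ is even, so it suffices to analyze the critical points of $g$ on $[0,\infty)$ and use symmetry to transfer conclusions to $(-\infty,0]$. Write $g(z) = \frac{1}{\sqrt{2\pi}} e^{-(z^2+\alpha^2)/2}\cosh(\alpha z)$ and compute
\[
g'(z) = \frac{1}{\sqrt{2\pi}}\, e^{-(z^2+\alpha^2)/2}\,\bigl[\alpha\sinh(\alpha z) - z\cosh(\alpha z)\bigr].
\]
Since the exponential prefactor is strictly positive, the sign of $g'(z)$ is governed entirely by the bracketed factor, which I will call $\psi(z) = \alpha\sinh(\alpha z) - z\cosh(\alpha z)$. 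The whole problem becomes: count the zeros of $\psi$ on $(0,\infty)$ and determine the sign changes.

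The first move is to handle $z=0$: $\psi(0)=0$ always, so $0$ is always a critical point of $g$; whether it is a local max or a local min is what distinguishes the two regimes. For $z>0$ where $\cosh(\alpha z)\neq 0$, divide by $\cosh(\alpha z)$ to get that $\psi(z)=0$ iff $\tanh(\alpha z) = z/\alpha$ (for $\alpha\neq 0$; the case $\alpha=0$ gives the normal, trivially unimodal). Equivalently, setting $w=\alpha z$, one studies $\tanh(w) = w/\alpha^2$ for $w$ of the same sign as $\alpha$. Because $\tanh$ is concave on $(0,\infty)$ with $\tanh(0)=0$, $\tanh'(0)=1$, and the line $w \mapsto w/\alpha^2$ also passes through the origin with slope $1/\alpha^2$, a standard convexity/slope comparison shows: if $1/\alpha^2 \ge 1$ (i.e. $|\alpha|\le 1$) the line lies on or above $\tanh$ for all $w>0$, so there is no positive solution and $\psi(z)<0$ on $(0,\infty)$, meaning $g$ is strictly decreasing on $[0,\infty)$ — hence $g$ is unimodal with its unique mode at $0$. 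If $1/\alpha^2 < 1$ (i.e. $|\alpha|>1$) the line starts below $\tanh$ near $0$ and eventually overtakes it (since $\tanh$ is bounded), giving exactly one solution $w^\ast>0$, hence exactly one critical point $z^\ast = w^\ast/|\alpha| > 0$; checking signs, $\psi>0$ on $(0,z^\ast)$ and $\psi<0$ on $(z^\ast,\infty)$, so $g$ increases then decreases on $[0,\infty)$, making $z^\ast$ a local maximum and $0$ a local minimum. By symmetry $-z^\ast$ is the mirror local maximum, so $g$ is bimodal.

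The main obstacle is making the "exactly one solution" claims rigorous and clean, i.e. the uniqueness of the positive root of $\tanh(w)=w/\alpha^2$ and the corresponding sign pattern of $\psi$. The natural tool is strict concavity of $\tanh$ on $(0,\infty)$: a strictly concave function and a line through the origin can meet in at most one point of $(0,\infty)$, and they do meet when the line's slope is less than $\tanh'(0)=1$ because $\tanh(w)\to 1$ while the line is unbounded; combined with the boundary value at $0$ this pins down the count and the sign changes. I would also need to confirm the borderline case $|\alpha|=1$ falls on the unimodal side: there $\tanh(w) < w$ strictly for all $w>0$ (again by strict concavity and matching derivatives at $0$), so $\psi(z)<0$ on $(0,\infty)$ and $0$ remains the unique mode. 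Finally I would note the second-derivative or sign-change argument confirms $0$ switches from a maximum (when $|\alpha|\le 1$) to a minimum (when $|\alpha|>1$), which is consistent with the transition described after \eqref{density-BN}.
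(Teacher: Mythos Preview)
Your proof is correct and shares the paper's overall strategy: both reduce to counting positive roots of the critical-point equation $\tanh(\alpha z)=z/\alpha$ (equivalently $\tanh w = w/\alpha^{2}$ after the substitution $w=\alpha z$), with $z=0$ always a root. The difference lies in the root-counting device. The paper argues case by case: for $|\alpha|\le 1$ it shows the auxiliary function $g(x)=\tanh[\alpha(x-\mu)/\sigma]-(x-\mu)/(\alpha\sigma)$ is monotone via the sign of $g'$, forcing a single root; for $|\alpha|>1$ it applies the Intermediate Value Theorem to locate two extra roots inside $(\mu-\sigma\alpha,\mu)$ and $(\mu,\mu+\sigma\alpha)$ and then invokes Rolle's theorem for uniqueness on each subinterval. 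You instead exploit the strict concavity of $\tanh$ on $(0,\infty)$ (equivalently, that $\tanh(w)/w$ decreases strictly from $1$ to $0$), which handles both regimes uniformly by comparing the line's slope $1/\alpha^{2}$ to $\tanh'(0)=1$. Your argument is cleaner and avoids some delicate sign-tracking in the paper's Rolle step; the paper's argument is slightly more elementary and delivers as a by-product the explicit bracketing $(\mu-\sigma\alpha,\mu+\sigma\alpha)$ for the modes. One small point worth making explicit: since $\cosh$ is even, the density depends only on $|\alpha|$, which justifies your tacit reduction to $\alpha>0$ when you restrict to $w>0$.
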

\begin{proof}
	Let us suppose that $\alpha\neq 0$ because for the case $\alpha= 0$ the unimodality is well known.
	
The derivative of $f(x;\boldsymbol{\theta})$ with respect to $x$ is:
\begin{align*}
f'(x;\boldsymbol{\theta})
&=
{f(x;\boldsymbol{\theta})\over\sigma}\,
\biggl\{\alpha\tanh\biggl[\alpha\biggl({x-\mu\over \sigma}\biggr)\biggr]- \biggl({x-\mu\over \sigma}\biggr)\biggr\}.
\end{align*}
Then, $f'(x;\boldsymbol{\theta})=0$ if and only if
\begin{align}\label{derivative}
\tanh\biggl[\alpha\biggl({x-\mu\over \sigma}\biggr)\biggr]
=
{x-\mu\over \alpha\sigma}.
\end{align}

Let $g(x;\boldsymbol{\theta})=\tanh[\alpha (x-\mu)/\sigma]-{(x-\mu)/(\alpha\sigma)}$.
Note that, for all $\alpha\neq 0$, $x=\mu$ is a root of $g(x;\boldsymbol{\theta})$. In what follows we divided the proof in two steps.

\smallskip
{\sc First step: proving unimodality.}
Note that,  $g'(x;\boldsymbol{\theta})=(1/\sigma)\{\alpha{\rm sech}^2[\alpha(x-\mu)/\sigma]-1/\alpha\}<0$ on  $(-\infty,+\infty)$ when $0<\alpha\leq 1$, and $g'(x;\boldsymbol{\theta})>0$ on $(-\infty,+\infty)$ when $-1\leq \alpha<0$, because ${\rm sech}^2(x)\leq 1$.

Since the function $g(x;\boldsymbol{\theta})$ has opposite signs at the extremes of the interval (i.e., $\lim_{x\to -\infty}g(x;\boldsymbol{\theta})=+\infty$, $\lim_{x\to +\infty}g(x;\boldsymbol{\theta})=-\infty$ when $0<\alpha\leq 1$, and $\lim_{x\to -\infty}g(x;\boldsymbol{\theta})=-\infty$, $\lim_{x\to +\infty}g(x;\boldsymbol{\theta})=+\infty$ when $-1\leq \alpha<0$) and is monotonic, it will have a single zero at $x=\mu$. Then, since $\lim_{x\to \pm\infty} f(x;\boldsymbol{\theta})\stackrel{\eqref{limits-density}}{=}0$, the unimodality of the BN distribution \eqref{density-BN} is guaranteed.


\smallskip
{\sc Second step: proving bimodality.}
Without loss of generality, now we assume that $\alpha>1$ because the other case  $\alpha<-1$ is verified using similar arguments. For this case, note that $g(x;\boldsymbol{\theta})>0$ when $x\leq \mu-\sigma\alpha$ and $g(x;\boldsymbol{\theta})<0$ when $x\geq \mu+\sigma\alpha$. Then, there is no root of $g(x;\boldsymbol{\theta})$ outside of the interval $(\mu-\sigma\alpha,\mu+\sigma\alpha)$.

Using Intermediate value theorem, $g(\mu-\sigma\alpha;\boldsymbol{\theta})=1-\tanh(\alpha^2)>0$, $\varepsilon^-=\lim_{x\to \mu^-}g(x;\boldsymbol{\theta})<0$, 
and
$\varepsilon^+=\lim_{x\to \mu^+}g(x;\boldsymbol{\theta})>0$, $g(\alpha;\boldsymbol{\theta})=\tanh(\alpha^2)-1<0$, thus, there are $c_1\in (\mu-\sigma\alpha,\varepsilon^-)$ and $c_3\in (\varepsilon^+,\mu+\sigma\alpha)$: $g(c_i;\boldsymbol{\theta})=0$ for $i=1,3$.

Now we prove uniqueness of root on  $(\mu-\sigma\alpha,\varepsilon^-)$. Indeed,
assume  that $g(x;\boldsymbol{\theta})$ has two solutions $g(a;\boldsymbol{\theta})=g(b;\boldsymbol{\theta})=0$, $\mu-\sigma\alpha<a<b<\varepsilon^-$, then according to Rolle's theorem there is $c^*\in(a,b)$: $g'(c^*;\boldsymbol{\theta})=0$. But $g'(x;\boldsymbol{\theta})=(1/\sigma)[\alpha{\rm sech}^2(\alpha x)-{1/\alpha}]<0$ on $(\mu-\sigma\alpha,\varepsilon^-)$ with $\alpha>1$, and has no solutions, contradiction. Therefore, $g(x;\boldsymbol{\theta})$ has exactly one real solution on $(\mu-\sigma\alpha,\varepsilon^-)$. Similarly, it is verified that on $(\varepsilon^+,\mu+\sigma\alpha)$,  $g(x;\boldsymbol{\theta})$ has exactly one real solution.

In other words, for $\alpha>1$, $g(x;\boldsymbol{\theta})$ has exactly three real roots, denoted by $x_1,x_2, x_3$,  such that $x_1<x_2=\mu<x_3$. Finally, since $\lim_{x\to \pm\infty} f(x;\boldsymbol{\theta})\stackrel{\eqref{limits-density}}{=}0$, the  bimodality of the BN distribution \eqref{density-BN} follows.
\end{proof}

\begin{remark}
The modes of the BN distribution belong to the interval $(\mu-\sigma\alpha,\mu+\sigma\alpha)$.

By symmetry, there is $\delta=\delta(\sigma,\alpha)\in(0,\sigma\alpha)$ so that $x_1=\mu-\delta$ and $x_3=\mu+\delta$.
	
Moreover, when $\vert\alpha\vert>1$ and $\vert x\vert$ is sufficiently large, the modes of the BN distribution  are given by $x_1\approx \mu-\sigma\alpha$ and $x_3\approx \mu+\sigma\alpha$, because $\lim_{x\to\pm\infty}\tanh[\alpha (x-\mu)/\sigma]=\pm 1$.
\end{remark}

\begin{corollary}
The modal point $x_0=x_0(\boldsymbol{\theta})$ is a non-decreasing function of $\mu$ whenever $\vert\alpha\vert \leq 1$.
\end{corollary}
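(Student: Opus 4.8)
The plan is to reduce the corollary to the explicit description of the mode already obtained inside the proof of Theorem~\ref{Main Theorem}. First I would recall that, in the first step of that proof, it was shown that for $0<|\alpha|\le 1$ (and $\alpha=0$ gives the normal law) the auxiliary function $g(x;\boldsymbol{\theta})=\tanh[\alpha(x-\mu)/\sigma]-(x-\mu)/(\alpha\sigma)$ is strictly monotone on $\mathbb{R}$, hence vanishes only at $x=\mu$; since $f(x;\boldsymbol{\theta})>0$ for all $x$ and $f(x;\boldsymbol{\theta})\to 0$ as $x\to\pm\infty$, that unique critical point is the unique maximizer. Thus $x_0(\boldsymbol{\theta})=\mu$ whenever $|\alpha|\le 1$, and as a function of $\mu$ (with $\sigma,\alpha$ held fixed) it has derivative $1\ge 0$, so it is non-decreasing.

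To make the mechanism transparent I would also phrase it via translation covariance: $g(x;\boldsymbol{\theta})$ depends on $x$ and $\mu$ only through the difference $x-\mu$, so the critical-point equation $f'(x;\boldsymbol{\theta})=0$ is invariant under the simultaneous shift $x\mapsto x+t$, $\mu\mapsto\mu+t$. Hence $x_0(\mu+t,\sigma,\alpha)=x_0(\mu,\sigma,\alpha)+t$ for every $t\in\mathbb{R}$, which forces $x_0$ to be affine in $\mu$ with slope exactly $1$, in particular non-decreasing. Equivalently, one can differentiate $g(x_0;\boldsymbol{\theta})=0$ implicitly: along the zero set $\partial g/\partial\mu=-\partial g/\partial x$, the latter being nonzero by the strict monotonicity recalled above, so $\mathrm{d}x_0/\mathrm{d}\mu=1$.

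There is no real obstacle here; the only subtlety worth flagging is that the hypothesis $|\alpha|\le 1$ is exactly what makes the mode unique, so that the phrase ``the modal point'' is well defined. In the bimodal regime $|\alpha|>1$ the same covariance argument applies separately to each of the two modal branches $x_1$ and $x_3$, but then one must first fix which branch $x_0$ denotes before the monotonicity statement makes sense.
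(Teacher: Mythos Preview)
Your proposal is correct, but it follows a different route from the paper. The paper's own argument starts from the implicit modal equation $x_0=\alpha\sigma\tanh[\alpha(x_0-\mu)/\sigma]+\mu$ and differentiates in~$\mu$, arriving at the inequality
\[
\frac{\partial x_0}{\partial\mu}
=
1-\alpha^{2}\,{\rm sech}^{2}\!\biggl[\alpha\biggl(\frac{x_0-\mu}{\sigma}\biggr)\biggr]\ge 0
\quad\text{for }|\alpha|\le 1,
\]
and concludes monotonicity from the sign. Your approach instead extracts from the unimodality step of Theorem~\ref{Main Theorem} the explicit identity $x_0(\boldsymbol{\theta})=\mu$ whenever $|\alpha|\le 1$, so the conclusion is immediate with slope exactly~$1$. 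Your translation-covariance formulation (the density depends on $x$ and $\mu$ only through $x-\mu$) gives the same slope-$1$ conclusion without any calculation at all, and, as you note, it even extends to each modal branch in the bimodal case. What your approach buys is simplicity and a sharper statement ($x_0$ is affine with unit slope, not merely non-decreasing); what the paper's implicit-differentiation setup buys is a template that carries over verbatim to the next corollary on monotonicity in $\sigma$ and~$\alpha$, where no closed form for $x_0$ is available.
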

\begin{proof}
By \eqref{derivative}, a modal point $x_0$ of BN distribution satisfies
\begin{align}\label{modal-point}
x_0=\alpha\sigma \tanh\biggl[\alpha\biggl({x_0-\mu\over\sigma}\biggr)\biggr]+\mu.
\end{align}
Differentiating $x_0$ with respect to $\mu$ gives
\begin{align*}
	{\partial x_0\over\partial\mu}
	=
	1-\alpha^2{\rm sech}^2\biggl[\alpha\biggl({x_0-\mu\over\sigma}\biggr)\biggr]\geq 0,
\end{align*}
whenever $\vert\alpha\vert \leq 1$.

Hence $x_0$ is a non-decreasing function of $\mu$.
\end{proof}

\begin{corollary}
The modal point $x_0=x_0(\boldsymbol{\theta})$ is a non-decreasing function of $\sigma$ (resp. of $\alpha$) whenever $x_0 \geq \mu$ and a non-increasing function  of $\sigma$ (resp. of $\alpha$) whenever $x_0< \mu$.
\end{corollary}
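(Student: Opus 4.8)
The plan is to differentiate the modal-point equation \eqref{modal-point} implicitly with respect to $\sigma$ (and then, analogously, with respect to $\alpha$) and to read off the sign of the resulting derivative. Writing $u = \alpha(x_0-\mu)/\sigma$ for brevity, differentiating $x_0 = \alpha\sigma\tanh(u) + \mu$ with respect to $\sigma$ gives
\begin{align*}
\frac{\partial x_0}{\partial\sigma}
=
\alpha\tanh(u)
+
\alpha\sigma\,{\rm sech}^2(u)\,\frac{\partial u}{\partial\sigma},
\qquad
\frac{\partial u}{\partial\sigma}
=
\frac{\alpha}{\sigma}\,\frac{\partial x_0}{\partial\sigma}
-
\frac{\alpha(x_0-\mu)}{\sigma^2}.
\end{align*}
Substituting the second relation into the first and solving the resulting linear equation for $\partial x_0/\partial\sigma$, the coefficient of $\partial x_0/\partial\sigma$ on the left becomes $1 - \alpha^2{\rm sech}^2(u)$, exactly the quantity that appeared in the previous corollary; after simplification one obtains an expression of the form
\begin{align*}
\bigl[1-\alpha^2{\rm sech}^2(u)\bigr]\,\frac{\partial x_0}{\partial\sigma}
=
\alpha\tanh(u)-\frac{\alpha^2(x_0-\mu)}{\sigma}\,{\rm sech}^2(u).
\end{align*}
Using $\alpha\tanh(u) = (x_0-\mu)/\sigma$ from \eqref{modal-point} (after subtracting $\mu$ and dividing by $\sigma$), the right-hand side collapses to $\tfrac{x_0-\mu}{\sigma}\bigl[1 - \alpha^2{\rm sech}^2(u)\bigr]$, so that $\partial x_0/\partial\sigma = (x_0-\mu)/\sigma$ at any modal point where $1-\alpha^2{\rm sech}^2(u)\neq 0$. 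The sign claim for $\sigma$ then follows immediately: $\partial x_0/\partial\sigma \geq 0$ when $x_0\geq\mu$ and $\partial x_0/\partial\sigma<0$ when $x_0<\mu$.

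For the dependence on $\alpha$, I would run the identical implicit-differentiation scheme, now with $\partial u/\partial\alpha = (x_0-\mu)/\sigma + (\alpha/\sigma)\,\partial x_0/\partial\alpha$, and again use \eqref{modal-point} to eliminate $\tanh(u)$. The cross terms should telescope in the same way, yielding $\partial x_0/\partial\alpha = (x_0-\mu)/\alpha$ (up to the nonvanishing factor $1-\alpha^2{\rm sech}^2(u)$ that divides out), whence the same sign dichotomy in terms of whether $x_0$ lies above or below $\mu$; note $x_0=\mu$ is the unimodal case and is covered trivially since there the derivative vanishes.

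The main obstacle is the factor $1-\alpha^2{\rm sech}^2(u)$ by which one must divide: when $|\alpha|>1$ this quantity vanishes precisely at the central critical point $x_0=\mu$ (a local minimum of the density, not a mode), and it could in principle vanish elsewhere. I would handle this by invoking the analysis in the proof of Theorem \ref{Main Theorem}: the genuine modes $x_1<\mu<x_3$ lie in the regions where $g'(x;\boldsymbol\theta)<0$, i.e. where $\alpha\,{\rm sech}^2(u) < 1/\alpha$, hence (for $\alpha>0$) where $1-\alpha^2{\rm sech}^2(u)>0$; the analogous inequality holds for $\alpha<0$. Thus at every actual mode the dividing factor is strictly positive, the division is legitimate, and the sign of $\partial x_0/\partial\sigma$ (resp. $\partial x_0/\partial\alpha$) is governed entirely by the sign of $x_0-\mu$, which is what the corollary asserts.
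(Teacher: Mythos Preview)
Your approach is more careful than the paper's: the paper simply differentiates the right-hand side of \eqref{modal-point} with respect to $\sigma$ (resp.\ $\alpha$) \emph{as if $x_0$ on that side were independent of the parameter}, obtaining
\[
\frac{\partial x_0}{\partial\sigma}=\alpha\tanh(u)-\alpha^{2}\Bigl(\frac{x_0-\mu}{\sigma}\Bigr){\rm sech}^{2}(u),
\qquad
\frac{\partial x_0}{\partial\alpha}=\sigma\Bigl\{\tanh(u)+\alpha\Bigl(\frac{x_0-\mu}{\sigma}\Bigr){\rm sech}^{2}(u)\Bigr\},
\]
and then reads off the signs directly. The chain-rule contribution through $x_0$, i.e.\ the factor $1-\alpha^{2}{\rm sech}^{2}(u)$ that you correctly isolate, never appears in the paper, and no justification is given for why it can be ignored. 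Your appeal to the analysis in Theorem~\ref{Main Theorem} to show this factor is strictly positive at genuine modes is exactly what is needed to make the argument rigorous, and your closed form $\partial x_0/\partial\sigma=(x_0-\mu)/\sigma$ is correct and cleaner than the paper's expression.

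There is, however, a computational slip in your $\alpha$-case: the cross terms do \emph{not} telescope in the same way. Carrying the calculation through and using \eqref{modal-point} in the form $\sigma\tanh(u)=(x_0-\mu)/\alpha$, the right-hand side becomes
\[
\sigma\tanh(u)+\alpha(x_0-\mu)\,{\rm sech}^{2}(u)
=\frac{x_0-\mu}{\alpha}\bigl[1+\alpha^{2}{\rm sech}^{2}(u)\bigr],
\]
so that
\[
\frac{\partial x_0}{\partial\alpha}
=\frac{x_0-\mu}{\alpha}\cdot\frac{1+\alpha^{2}{\rm sech}^{2}(u)}{1-\alpha^{2}{\rm sech}^{2}(u)},
\]
not $(x_0-\mu)/\alpha$. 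The sign conclusion survives, since both bracketed factors are positive at the modes by your own argument, but the claimed cancellation is false. Note also that the resulting sign is that of $(x_0-\mu)/\alpha$, so the monotonicity-in-$\alpha$ statement as phrased really presumes $\alpha>0$; this caveat applies equally to the paper's version.
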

\begin{proof}
Differentiating $x_0$ in \eqref{modal-point} with respect to $\sigma$ and $\alpha$ gives
\begin{align*}
{\partial x_0\over\partial\sigma}
=
\alpha
{\rm tanh}\biggl[\alpha\biggl({x_0-\mu\over\sigma}\biggr)\biggr]
-
\alpha^2\biggl({x_0-\mu\over\sigma}\biggr)
{\rm sech}^2\biggl[\alpha\biggl({x_0-\mu\over\sigma}\biggr)\biggr]
\end{align*}
and
\begin{align*}
{\partial x_0\over\partial\sigma}
=
\sigma\biggl\{
{\rm tanh}\biggl[\alpha\biggl({x_0-\mu\over\sigma}\biggr)\biggr]
+
\alpha\biggl({x_0-\mu\over\sigma}\biggr)
{\rm sech}^2\biggl[\alpha\biggl({x_0-\mu\over\sigma}\biggr)\biggr]
\biggr\}.
\end{align*}
From the above equations it follows that ${\partial x_0/\partial\sigma}\geq 0$ (resp. ${\partial x_0/\partial\alpha}\geq 0$) whenever $x_0 \geq \mu$ and ${\partial x_0/\partial\sigma}< 0$ (resp. ${\partial x_0/\partial\alpha}< 0$) whenever $x_0< \mu$.
\end{proof}

\section{Stochastic representation and moments}\label{sec:stoch:mom}

\begin{proposition}[Stochastic representation]\label{Stochastic representation}
Suppose $Z_{\mu,\sigma^2}$ has a normal distribution with expected value $\mu$ and variance $\sigma^2$. Let $W$ have the Bernoulli distribution, so that $W=\alpha\sigma$ or $W=-\alpha\sigma$, each with probability $1/2$, and assume $W$ is independent of $Z_{\mu,\sigma^2}$.
If $X=Z_{\mu,\sigma^2}+W$
then $X\sim \text{BN}(\boldsymbol{\theta})$.

Conversely, if $X\sim \text{BN}(\boldsymbol{\theta})$ then $X=Z_{\mu,\sigma^2}+W$.
\end{proposition}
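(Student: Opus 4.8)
The plan is to prove the forward implication by a direct convolution computation and then to observe that the converse is the very same identity read at the level of laws. Since $W$ is independent of $Z_{\mu,\sigma^2}$ and takes the two values $\pm\alpha\sigma$ with equal probability, I would condition on $W$ to write the density of $X=Z_{\mu,\sigma^2}+W$ as the equal mixture
\begin{align*}
f_X(x)={1\over 2}\,\phi_{\mu+\alpha\sigma,\sigma^2}(x)+{1\over 2}\,\phi_{\mu-\alpha\sigma,\sigma^2}(x),\qquad x\in\mathbb{R}.
\end{align*}
Expanding $(x-\mu\mp\alpha\sigma)^2=(x-\mu)^2\mp 2\alpha\sigma(x-\mu)+\alpha^2\sigma^2$ inside the two Gaussian exponents, the common factor $\exp\!\big[-\tfrac12((x-\mu)/\sigma)^2-\alpha^2/2\big]/\sqrt{2\pi\sigma^2}$ factors out and the two remaining exponentials combine, through $\cosh z=[\exp(z)+\exp(-z)]/2$, into $\cosh[\alpha(x-\mu)/\sigma]$. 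The result is exactly $f(x;\boldsymbol{\theta})$ in \eqref{density-BN}, whence $X\sim\text{BN}(\boldsymbol{\theta})$.

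An alternative route, which I would actually present since it also makes the uniqueness transparent, uses moment generating functions: by independence, $M_X(t)=M_{Z_{\mu,\sigma^2}}(t)\,M_W(t)=\exp\!\big(\mu t+\tfrac12\sigma^2 t^2\big)\cdot\tfrac12\big[\exp(\alpha\sigma t)+\exp(-\alpha\sigma t)\big]=\exp\!\big(\mu t+\tfrac12\sigma^2 t^2\big)\cosh(\alpha\sigma t)$, which coincides with the MGF of the BN law recorded in property (P.5). Since this MGF is finite on all of $\mathbb{R}$, it determines the distribution, giving $X\sim\text{BN}(\boldsymbol{\theta})$.

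For the converse, the computation above already shows that the density (equivalently, the MGF) of $Z_{\mu,\sigma^2}+W$ equals that of $\text{BN}(\boldsymbol{\theta})$ for every admissible $\boldsymbol{\theta}$; hence if $X\sim\text{BN}(\boldsymbol{\theta})$, then $X$ has the same law as $Z_{\mu,\sigma^2}+W$, and realizing such a pair on a (possibly enlarged) probability space yields the representation. I do not expect a genuine obstacle here: the entire argument reduces to the elementary Gaussian algebra above, and the only point needing care is reading the converse as an equality in distribution rather than an almost-sure identity.
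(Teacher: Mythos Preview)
Your proof is correct and follows essentially the same approach as the paper: both condition on $W$ (law of total probability) to express the distribution of $X$ as an equal two-component normal mixture and then identify it with the $\text{BN}(\boldsymbol{\theta})$ law. The only difference is presentational---the paper carries out the computation at the level of CDFs, matching the mixture $\tfrac12\Phi\!\big({x-\mu-\alpha\sigma\over\sigma}\big)+\tfrac12\Phi\!\big({x-\mu+\alpha\sigma\over\sigma}\big)$ to the erf-form \eqref{CDF-Bigaussian}, whereas you work directly with densities (recovering the $\cosh$ factor) and, alternatively, with MGFs via (P.5); your remark that the converse should be read as equality in distribution is also well taken.
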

\begin{proof} 
By Law of total probability and by independence, we get 
\begin{align*}
\mathbb{P}(X\leq x)
&=
\mathbb{P}(Z_{\mu,\sigma^2}+\alpha\sigma\leq x)\, \mathbb{P}(W=\alpha\sigma)+\mathbb{P}(Z_{\mu,\sigma^2}-\alpha\sigma\leq x)\, \mathbb{P}(W=-\alpha\sigma)
\\[0,2cm]
&=
\mathbb{P}(Z_{\mu,\sigma^2}+\alpha\sigma\leq x)\, {1\over 2}+\mathbb{P}(Z_{\mu,\sigma^2}-\alpha\sigma\leq x)\, {1\over 2}
\\[0,2cm]
&=
\Phi\biggl({x-\mu-\alpha\sigma\over\sigma}\biggr)\, {1\over 2}
+
\Phi\biggl({x-\mu+\alpha\sigma\over\sigma}\biggr)\, {1\over 2}.
\end{align*}
By using the identity $\Phi(x)=(1/2)[1+{\rm erf}({x/\sqrt{2}})]$, the above expression is equal to
\begin{align*}
{1\over 4}\, 
\left[2+{\rm erf}\biggl({x-\mu-\alpha\sigma\over \sigma\sqrt{2}}\biggr)
+
{\rm erf}\biggl({x-\mu+\alpha\sigma\over \sigma\sqrt{2} }\biggr)\right]
\stackrel{\eqref{CDF-Bigaussian}}{=}
F(x;\boldsymbol{\theta}),
\quad x\in\mathbb{R}.
\end{align*}

%
%
Then we have complete the proof.
\end{proof}


\begin{proposition}[Raw moments]
If $X\sim \text{BN}(\boldsymbol{\theta})$ then 
{\scalefont{0.89}
\begin{align*}
\mathbb{E}(X^n) =
\begin{cases}
\sigma^n 2^{n-2\over2}\, 
{\Gamma({n+1\over 2})\over\sqrt{\pi}}\,
\big[ \, 
_1\!F_1\big(-{n\over 2},{1\over 2};-{\{\mu+\alpha\sigma\}^2\over 2\sigma^2}\big)
+ 
_1\!\!F_1\big(-{n\over 2},{1\over 2};-{\{\mu-\alpha\sigma\}^2\over 2\sigma^2}\big)
\big], & \hspace{-0.3cm} \ n \ \text{even},
\\[0,35cm]
\sigma^{n-1}2^{n-1\over 2}\,  
{\Gamma({n\over 2}+1)\over\sqrt{\pi}}\,
\big[
(\mu+\alpha\sigma)
_1\!F_1\big({1-n\over 2},{3\over 2};-{\{\mu+\alpha\sigma\}^2\over 2\sigma^2}\big)
+
(\mu-\alpha\sigma)
_1\!F_1\big({1-n\over 2},{3\over 2};-{\{\mu-\alpha\sigma\}^2\over 2\sigma^2}\big)
\big], & \hspace{-0.3cm} \ n \ \text{odd},
\end{cases}
\end{align*}
}
where
$
_1\!F_1(a,b;x)=[\Gamma(b)/\Gamma(a)]
\sum_{k=0}^{\infty} 
[\Gamma(a+k)/\Gamma(b+k) ] (x^k / {k!})
$
is the Kummer's confluent hypergeometric function; see \cite{{Win:14}}.
\end{proposition}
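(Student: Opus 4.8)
The plan is to compute the moments through the stochastic representation of Proposition~\ref{Stochastic representation} rather than integrating directly against the density \eqref{density-BN}. By that result, $X=Z_{\mu,\sigma^2}+W$ with $W$ independent of $Z_{\mu,\sigma^2}\sim N(\mu,\sigma^2)$ and $\mathbb{P}(W=\alpha\sigma)=\mathbb{P}(W=-\alpha\sigma)=\tfrac12$, so, conditionally on $W$, the variable $X$ is normal with variance $\sigma^2$ and mean $\mu+\alpha\sigma$ or $\mu-\alpha\sigma$. Hence, by the law of total expectation,
\[
\mathbb{E}(X^n)\;=\;\tfrac12\,\mathbb{E}(Y_+^n)\;+\;\tfrac12\,\mathbb{E}(Y_-^n),
\qquad Y_\pm\sim N(\mu\pm\alpha\sigma,\ \sigma^2),
\]
and the problem reduces to the classical (non-central) normal moments.

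Second, I would record the closed form of the raw moments of $Y\sim N(m,s^2)$ in terms of Kummer's function (see \cite{Win:14}):
\[
\mathbb{E}(Y^n)=
\begin{cases}
(2s^2)^{n/2}\,\dfrac{\Gamma\!\big(\frac{n+1}{2}\big)}{\sqrt{\pi}}\;{}_1F_1\!\Big(-\dfrac n2,\dfrac12;-\dfrac{m^2}{2s^2}\Big), & n\ \text{even},\\[0.45cm]
2m\,(2s^2)^{(n-1)/2}\,\dfrac{\Gamma\!\big(\frac n2+1\big)}{\sqrt{\pi}}\;{}_1F_1\!\Big(\dfrac{1-n}{2},\dfrac32;-\dfrac{m^2}{2s^2}\Big), & n\ \text{odd}.
\end{cases}
\]
If one prefers a self-contained derivation of this identity, write $Y=m+sN$ with $N\sim N(0,1)$, expand $(m+sN)^n$ by the binomial theorem, use $\mathbb{E}(N^{2j})=(2j)!/(2^j j!)$ together with $\mathbb{E}(N^{2j+1})=0$, and re-index the surviving sum (with summation index $i=\lfloor n/2\rfloor-j$) so that the ratio of consecutive terms matches that of ${}_1F_1$; identifying $(-n/2)_i,(1/2)_i$ in the even case and $((1-n)/2)_i,(3/2)_i$ in the odd case with the pertinent factorials then closes the argument. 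Differentiating the moment generating function $e^{mt+s^2t^2/2}$ at $t=0$, available here via (P.5), gives the same result.

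Third, substitute $m=\mu+\alpha\sigma$ and $m=\mu-\alpha\sigma$ with $s=\sigma$, add the two contributions with weight $\tfrac12$, and absorb the constant using $\tfrac12(2\sigma^2)^{n/2}=\sigma^n 2^{(n-2)/2}$ in the even case and $\tfrac12\cdot 2\cdot(2\sigma^2)^{(n-1)/2}=\sigma^{n-1}2^{(n-1)/2}$ in the odd case. This reproduces the stated formula verbatim.

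The only slightly delicate point is the bookkeeping in the re-indexing that turns the finite binomial sum into a terminating ${}_1F_1$ (matching Pochhammer symbols with factorials); everything else is immediate once Proposition~\ref{Stochastic representation} is invoked, since it makes the even/odd dichotomy transparent --- each conditional law is a genuine non-central normal, whose moments are textbook.
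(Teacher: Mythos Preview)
Your proposal is correct and follows essentially the same route as the paper: invoke the stochastic representation of Proposition~\ref{Stochastic representation} to write $\mathbb{E}(X^n)=\tfrac12\,\mathbb{E}_{\Phi_{\mu+\alpha\sigma,\sigma^2}}(X^n)+\tfrac12\,\mathbb{E}_{\Phi_{\mu-\alpha\sigma,\sigma^2}}(X^n)$, then plug in Winkelbauer's ${}_1F_1$ formula for the raw moments of a non-central normal. The only addition on your side is the optional self-contained derivation of the Winkelbauer identity and the explicit constant bookkeeping, neither of which the paper spells out.
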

\begin{proof}
	
By Proposition \ref{Stochastic representation} we have
\begin{align*}
\mathbb{E}(X^{n})
=
{1\over 2}\, \big[\mathbb{E}_{\Phi_{\mu+\alpha\sigma, \sigma^2 }}(X^n)+\mathbb{E}_{\Phi_{\mu-\alpha\sigma, \sigma^2 }}(X^n)\big],
\end{align*}
where $\mathbb{E}_{\Phi_{\mu+\alpha\sigma, \sigma^2 }}(\cdot)$ denotes the expectation with respect to distribution function $\Phi_{\mu+\alpha\sigma, \sigma^2 }$.

By combining the above equality with the following known identity \citep{Win:14}, for $Y\sim N(\mu,\sigma^2)$,
\begin{align*}
\mathbb{E}(Y^n) =
\begin{cases}
\sigma^n 2^{n/2}\, 
{\Gamma({n+1\over 2})\over\sqrt{\pi}}\,
_1\!F_1\big(-{n\over 2},{1\over 2};-{\mu^2\over 2\sigma^2}\big), &  n \ \text{even},
\\[0,35cm]
\mu\sigma^{n-1} 2^{(n+1)/2}\,  
{\Gamma({n\over 2}+1)\over\sqrt{\pi}}\,
_1\!F_1\big({1-n\over 2},{3\over 2};-{\mu^2\over 2\sigma^2}\big), &  n \ \text{odd},
\end{cases}
\end{align*}
the proof follows.
\end{proof}

\begin{proposition}[Standardized moments]
If $X\sim \text{BN}(\boldsymbol{\theta})$ then 
\begin{align*}
\mathbb{E}\biggl[\biggl({X-\mu\over\sqrt{{\rm Var}(X)}}\biggr)^n\,\biggr]=
\begin{cases}
{1\over (1+\alpha^2)^{n/2}}
{\sum_{\substack{0\leq k\leq n\\ k\ {\rm even}}}} \binom{n}{k} \alpha^{n-k}
2^{-{k\over 2}}\, {k!\over (k/2)!}, &  n \ \text{even},
\\[0,35cm]
0, & n \ \text{odd}.
\end{cases}
\end{align*}
\end{proposition}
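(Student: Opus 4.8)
The plan is to reduce the statement to the ordinary $n$-th moment of the standardized variable and then exploit the stochastic representation of Proposition~\ref{Stochastic representation}. First I would use (P.1) to write $Z=(X-\mu)/\sigma\sim\text{BN}(0,1,\alpha)$ and (P.8) to record $\sqrt{{\rm Var}(X)}=\sigma\sqrt{1+\alpha^2}$, so that
\begin{align*}
\mathbb{E}\biggl[\biggl({X-\mu\over\sqrt{{\rm Var}(X)}}\biggr)^{\!n}\,\biggr]
=
{1\over(1+\alpha^2)^{n/2}}\,\mathbb{E}(Z^n).
\end{align*}
Thus everything reduces to computing $\mathbb{E}(Z^n)$ for $Z\sim\text{BN}(0,1,\alpha)$.

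Next, by the $\mu=0$, $\sigma=1$ instance of Proposition~\ref{Stochastic representation}, $Z\stackrel{d}{=}N+W$, where $N\sim N(0,1)$, $W=\alpha$ or $W=-\alpha$ each with probability $1/2$, and $W$ is independent of $N$. Conditioning on $W$ and using the binomial theorem,
\begin{align*}
\mathbb{E}(Z^n)
=
{1\over 2}\,\mathbb{E}\big[(N+\alpha)^n\big]+{1\over 2}\,\mathbb{E}\big[(N-\alpha)^n\big]
=
\sum_{j=0}^{n}\binom{n}{j}\,{\alpha^{n-j}+(-\alpha)^{n-j}\over 2}\,\mathbb{E}(N^j).
\end{align*}
Now I would invoke the standard Gaussian moments: $\mathbb{E}(N^j)=0$ for $j$ odd, and $\mathbb{E}(N^{2m})=(2m-1)!!=(2m)!/(2^m m!)$, i.e.\ $\mathbb{E}(N^k)=k!/(2^{k/2}(k/2)!)$ for $k$ even. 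Hence only even indices $j=k$ survive, and for those the factor $[\alpha^{n-k}+(-\alpha)^{n-k}]/2$ equals $\alpha^{n-k}$ when $n-k$ is even and $0$ when $n-k$ is odd. Since $k$ is even, $n-k$ has the parity of $n$; therefore the whole sum vanishes when $n$ is odd, and when $n$ is even it collapses to
\begin{align*}
\mathbb{E}(Z^n)=\sum_{\substack{0\leq k\leq n\\ k\ {\rm even}}}\binom{n}{k}\,\alpha^{n-k}\,{k!\over 2^{k/2}(k/2)!}.
\end{align*}
Dividing by $(1+\alpha^2)^{n/2}$ gives exactly the claimed expression.

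There is no serious obstacle here; the argument is essentially bookkeeping. The one point requiring care is the parity analysis of $[\alpha^{n-k}+(-\alpha)^{n-k}]/2$ together with the vanishing of odd Gaussian moments, which is what simultaneously produces the $n$-odd case ($0$) and restricts the surviving sum in the $n$-even case to even $k$. A purely analytic alternative would be to apply (P.2) with $f(z)=z^n$, expand $\cosh(\alpha z)=\sum_{\ell\ge 0}(\alpha z)^{2\ell}/(2\ell)!$, and integrate term by term against $\phi$; this reproduces the same double factorials but is messier, so I would present the stochastic-representation route.
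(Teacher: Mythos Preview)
Your argument is correct and follows essentially the same route as the paper: stochastic representation, binomial expansion of $(N\pm\alpha)^n$, and the parity analysis combining the vanishing of odd Gaussian moments with the factor $[\alpha^{n-k}+(-\alpha)^{n-k}]/2$. The only cosmetic difference is that you first reduce to $Z\sim\text{BN}(0,1,\alpha)$ via (P.1) before invoking Proposition~\ref{Stochastic representation}, whereas the paper applies the representation at general $(\mu,\sigma)$ and then changes variables, arriving at the identical sum.
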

\begin{proof}
By using Proposition \ref{Stochastic representation} and that ${\rm Var}(X)=\sigma^2(1+\alpha^2)$, we get
\begin{align*}
\mathbb{E}\biggl[\biggl({X-\mu\over\sqrt{{\rm Var}(X)}}\biggr)^n\,\biggr]
=
{1\over 2(1+\alpha^2)^{n/2}}\, 
\left\{
\mathbb{E}_{\Phi_{\mu+\alpha\sigma, \sigma^2 }}\biggl[\biggl({X-\mu\over\sigma}\biggr)^n\,\biggr]
+
\mathbb{E}_{\Phi_{\mu-\alpha\sigma, \sigma^2 }}\biggl[\biggl({X-\mu\over\sigma}\biggr)^n\,\biggr]
\right\},
\end{align*}
where $\mathbb{E}_{\Phi_{\mu+\alpha\sigma, \sigma^2 }}(\cdot)$ denotes the expectation with respect to distribution function $\Phi_{\mu+\alpha\sigma, \sigma^2 }$.
Taking the change of variable $z=(x-\mu)/\sigma$, ${\rm d}z={\rm d}x/\sigma$, and a binomial expansion, we have
\begin{align} \label{id-1}
\mathbb{E}\biggl[\biggl({X-\mu\over\sqrt{{\rm Var}(X)}}\biggr)^n\,\biggr]
&=
{1\over 2 (1+\alpha^2)^{n/2}}\, 
\left\{
\mathbb{E}_{\Phi}\big[(Z+\alpha)^n\,\big]
+
\mathbb{E}_{\Phi}\big[(Z-\alpha)^n\,\big]
\right\} \nonumber
\\[0,2cm]
&=
{1\over 2(1+\alpha^2)^{n/2}}\, 
\sum_{k=0}^{n}\binom{n}{k} \big[1+(-1)^{n-k}\big] \alpha^{n-k}
\mathbb{E}_{\Phi}(Z^k).
\end{align}
A simple observation shows that, when $n$ is even,
\begin{align} \label{id-2}
{1\over 2(1+\alpha^2)^{n/2}}\, 
\sum_{k=0}^{n}\binom{n}{k} \big[1+(-1)^{n-k}\big] \alpha^{n-k}
\mathbb{E}_{\Phi}(Z^k)
=
{1\over (1+\alpha^2)^{n/2}}
{\sum_{\substack{0\leq k\leq n\\ k\ {\rm even}}}} \binom{n}{k} \alpha^{n-k} \mathbb{E}_{\Phi}(Z^k),
\end{align}
and, when $n$ is odd,
\begin{align}\label{id-3}
{1\over 2(1+\alpha^2)^{n/2}}\, 
\sum_{k=0}^{n}\binom{n}{k} \big[1+(-1)^{n-k}\big] \alpha^{n-k}
\mathbb{E}_{\Phi}(Z^k)
=
{1\over (1+\alpha^2)^{n/2}}
{\sum_{\substack{0\leq k\leq n\\ k\ {\rm odd}}}} \binom{n}{k} \alpha^{n-k} \mathbb{E}_{\Phi}(Z^k).
\end{align}
Finally, by combining the known identities, $\mathbb{E}_{\Phi}(Z^k)=0$ for $k$ odd, and 
\begin{align*}
\mathbb{E}_{\Phi}(Z^k)=2^{-{k\over 2}}\, {k!\over (k/2)!},
\end{align*}
for $k$ even, with \eqref{id-1}, \eqref{id-2} and \eqref{id-3}, the proof follows.
\end{proof}

\section{Identifiability of the BN distribution}\label{Identifiability}

As a consequence of Proposition \ref{Stochastic representation} we know that the BN PDF $f(x;\boldsymbol{\theta})$ in \eqref{density-BN}, with parameter vector $\boldsymbol{\theta} =(\mu,\sigma,\alpha)$, can be written as a finite mixture of two normal distributions with different location parameters, i.e.
\begin{align}\label{eq-eq-dens}
f(x;\boldsymbol{\theta})
=
{1\over 2}\, 
\big[\phi_{\mu+\alpha\sigma, \sigma^2 }(x)
+
\phi_{\mu-\alpha\sigma, \sigma^2}(x)\big].
\end{align}

\smallskip 
Let $\mathcal{N}$ be the family of normal distributions, as follows:
\begin{align*}
\mathcal{N}=\biggl\{F:F(x;\mu,\sigma)=\int_{-\infty}^{x}\phi_{\mu, \sigma^2 }(y)\, {\rm d}y, \ \mu\in\mathbb{R}, \sigma>0,\  x\in\mathbb{R} \biggl\}.
\end{align*}

Write $\mathcal{H}_{\mathcal{N}}$ the class of all finite mixtures of $\mathcal{N}$. It is well-known that the class $\mathcal{H}_{\mathcal{N}}$ is identifiable \citep{Teicher:63}.

The following result proves the identifiability of BN distribution.
\begin{proposition}
	The mapping $\boldsymbol{\theta} \longmapsto f(x;\boldsymbol{\theta})$, for all $x\in\mathbb{R}$, is one-to-one.
\end{proposition}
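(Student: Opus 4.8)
The plan is to deduce the claim from the already-cited identifiability of the class $\mathcal{H}_{\mathcal{N}}$ of finite mixtures of normals. First I would take two parameter vectors $\boldsymbol{\theta}_j=(\mu_j,\sigma_j,\alpha_j)$, $j=1,2$, with $f(x;\boldsymbol{\theta}_1)=f(x;\boldsymbol{\theta}_2)$ for every $x\in\mathbb{R}$, and record that the density in \eqref{density-BN} depends on $\alpha$ only through $\alpha^2$ and $\cosh(\alpha\,\cdot)$, both even in $\alpha$; hence $f(\cdot;\mu,\sigma,\alpha)=f(\cdot;\mu,\sigma,-\alpha)$, so without loss of generality one may assume $\alpha_1,\alpha_2\ge 0$ (the mapping is injective on $\mathbb{R}\times(0,\infty)\times[0,\infty)$, and on all of $\mathbb{R}^3$ it is injective up to the sign of $\alpha$).

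Next I would use the mixture representation \eqref{eq-eq-dens} to view $f(\cdot;\boldsymbol{\theta}_1)$ and $f(\cdot;\boldsymbol{\theta}_2)$ as elements of $\mathcal{H}_{\mathcal{N}}$, splitting into cases according to whether the $\alpha_j$ vanish. If $\alpha_1=0$, then $f(\cdot;\boldsymbol{\theta}_1)=\phi_{\mu_1,\sigma_1^2}$ is a single normal PDF; were $\alpha_2>0$, then $\mu_2+\alpha_2\sigma_2\neq\mu_2-\alpha_2\sigma_2$, so $f(\cdot;\boldsymbol{\theta}_2)$ would be a genuine two-component mixture of distinct normals, and the identifiability of $\mathcal{H}_{\mathcal{N}}$ — which also rules out two representations with different numbers of distinct components — would be contradicted. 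Hence $\alpha_2=0$ as well, and $\phi_{\mu_1,\sigma_1^2}=\phi_{\mu_2,\sigma_2^2}$ forces $\mu_1=\mu_2$ and $\sigma_1=\sigma_2$.

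In the remaining case $\alpha_1,\alpha_2>0$, both densities are two-component normal mixtures with pairwise distinct components and weights $(1/2,1/2)$. By Teicher's theorem \citep{Teicher:63}, the two representations agree as multisets of weighted components; since the weights are equal this gives the multiset identity $\{\phi_{\mu_1+\alpha_1\sigma_1,\sigma_1^2},\phi_{\mu_1-\alpha_1\sigma_1,\sigma_1^2}\}=\{\phi_{\mu_2+\alpha_2\sigma_2,\sigma_2^2},\phi_{\mu_2-\alpha_2\sigma_2,\sigma_2^2}\}$. Comparing the common variances yields $\sigma_1=\sigma_2=:\sigma$, and comparing the location multisets $\{\mu_1\pm\alpha_1\sigma\}=\{\mu_2\pm\alpha_2\sigma\}$ leaves only $\mu_1=\mu_2$ together with either $\alpha_1=\alpha_2$ or $\alpha_1=-\alpha_2$, the latter being excluded by $\alpha_1,\alpha_2>0$. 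Thus $\boldsymbol{\theta}_1=\boldsymbol{\theta}_2$ in every case, which is the asserted injectivity.

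I expect the only delicate point to be the careful use of Teicher's identifiability result across representations with possibly different numbers of distinct components (the $\alpha_1=0$ versus $\alpha_2>0$ situation), together with the bookkeeping of the sign indeterminacy of $\alpha$, which must be absorbed either into the convention $\alpha\ge 0$ or into a statement "up to the sign of $\alpha$"; everything else is elementary algebra once \eqref{eq-eq-dens} and \citep{Teicher:63} are in hand.
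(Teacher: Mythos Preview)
Your approach is the same as the paper's: invoke the mixture representation \eqref{eq-eq-dens} and then Teicher's identifiability of $\mathcal{H}_{\mathcal{N}}$ to match the normal components. The paper's argument is in fact shorter than yours---it simply asserts that from $\mu\pm\alpha\sigma=\mu'\pm\alpha'\sigma'$ and $\sigma^2=\sigma'^2$ one ``immediately'' obtains $\mu=\mu'$, $\sigma=\sigma'$, $\alpha=\alpha'$---which glosses over exactly the sign ambiguity of $\alpha$ and the one-versus-two-component degeneracy at $\alpha=0$ that you treat explicitly. Your observation that $f(\cdot;\mu,\sigma,\alpha)=f(\cdot;\mu,\sigma,-\alpha)$ is correct, so the mapping as literally stated (with $\alpha\in\mathbb{R}$) is only injective up to the sign of $\alpha$; your convention $\alpha\ge 0$ or your qualifier ``up to the sign of $\alpha$'' is the right fix, and your case split is the honest way to apply Teicher's theorem. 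In short, your proof is a more careful version of the paper's own argument.
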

\begin{proof}
Let us suppose that $f(x;\boldsymbol{\theta})=f(x;\boldsymbol{\theta}')$ for all $x\in\mathbb{R}$. In other words, by \eqref{eq-eq-dens},
\begin{align*}
{1\over 2}\, 
\big[\phi_{\mu+\alpha\sigma, \sigma^2 }(x)
+
\phi_{\mu-\alpha\sigma, \sigma^2}(x)\big]
=
{1\over 2}\, 
\big[\phi_{\mu'+\alpha'\sigma', \sigma'^2 }(x)
+
\phi_{\mu'-\alpha'\sigma', \sigma'^2}(x)\big].
\end{align*}
Since $\mathcal{H}_{\mathcal{N}}$ is identifiable, we have
$\mu\pm\alpha\sigma=\mu'\pm\alpha'\sigma'$ and $\sigma^2=\sigma'^2$.  From where, immediately follows that $\mu=\mu'$, $\sigma=\sigma'$ and $\alpha=\alpha'$. Therefore, $\boldsymbol{\theta}=\boldsymbol{\theta}'$, and the identifiability of distribution follows.
\end{proof}

\section{Asymptotic properties}\label{sec:asymptot}
Let $X$ be a random variable with BN distribution $f(x;\boldsymbol{\theta})$ that depends on a parameter vector $\boldsymbol{\theta}= (\mu,\sigma,\alpha)$, for $\boldsymbol{\theta}$ in an open subset of $\mathbb{R}^3$, where
distinct values of $\boldsymbol{\theta}$ yield distinct distributions for $X$ (see Section \ref{Identifiability}).
Let $\boldsymbol{X}=(X_1,\ldots, X_n)$ be a random sample of $X$. The log-likelihood function for $\boldsymbol{\theta}$ is given by 
\begin{align*}
l(\boldsymbol{\theta};\boldsymbol{X})
=
{\rm const}-n\log(\sigma)-{n\alpha^2\over 2}
-{1\over 2}\, \sum_{i=1}^{n} \biggl({X_i-\mu\over\sigma}\biggr)^2
+
\sum_{i=1}^{n}\log\cosh\biggl[\alpha\biggl({X_i-\mu\over \sigma}\biggr)\biggr].
\end{align*}
A simple computation shows that
\begin{align}
&
{\partial l(\boldsymbol{\theta};\boldsymbol{X})\over \partial\mu}
=
{n\over \sigma}\, \biggl({\overline{X}-\mu\over\sigma}\biggr)
-
{\alpha\over\sigma}\sum_{i=1}^n {\rm tanh}\biggl[\alpha\biggl({ X_i-\mu\over \sigma}\biggl)\biggr], \label{score}
\\[0,2cm]
&
{\partial l(\boldsymbol{\theta};\boldsymbol{X})\over \partial\sigma}
=
-{n\over\sigma}
+
{1\over \sigma}\, \sum_{i=1}^{n} \biggl({X_i-\mu\over \sigma}\biggl)^2
-
{\alpha\over\sigma}\sum_{i=1}^n \biggl({X_i-\mu\over \sigma}\biggl) {\rm tanh}\biggl[\alpha\biggl({X_i-\mu\over \sigma}\bigg)\biggr], \label{score-1}
\\[0,2cm]
&
{\partial l(\boldsymbol{\theta};\boldsymbol{X})\over \partial\alpha}
=
-\alpha n+ \sum_{i=1}^n \biggl({X_i-\mu\over\sigma}\biggr) {\rm tanh}\biggl[\alpha\biggl({ X_i-\mu\over \sigma}\biggr)\biggr]. \label{score-2}
\end{align}
The maximum log-likelihood equations for the estimators $\widehat{\mu}$, $\widehat{\sigma}$, $\widehat{\alpha}$  are as follows:
\begin{align*}
& 
\widehat{\mu}
=
\overline{X}
-
{\widehat{\alpha}\over n}
\sum_{i=1}^n {\rm tanh}\biggl[\widehat{\alpha} \biggl({ X_i-\widehat{\mu}\over \widehat{\sigma}}\biggr)\biggr],
\\[0,2cm]
&
\widehat{\sigma}^2
=
{1\over (1+\widehat{\alpha}^2)n}\, 
\sum_{i=1}^{n} (X_i-\widehat{\mu})^2,
\\[0,2cm]
&\widehat{\alpha} =
{1\over n}\,
\sum_{i=1}^n \biggl({X_i-\widehat{\mu}\over\widehat{\sigma}}\biggr) {\rm tanh}\biggl[\widehat{\alpha}\biggl({ X_i-\widehat{\mu}\over \widehat{\sigma}}\biggr)\biggr].
\end{align*}

In the following two propositions we study the existence 
of the ML estimates when the other parameters are known.
\begin{proposition}\label{prop-existence-roots}
If the parameters $\sigma$ and $\alpha$ are known, then the equation \eqref{score} has at least one root on the interval $(-\infty, +\infty)$.
\end{proposition}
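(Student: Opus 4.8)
The plan is to view the right-hand side of \eqref{score} as a function of $\mu$ alone, with $\sigma>0$ and $\alpha$ held fixed, and then to produce a zero via the Intermediate Value Theorem. Set
\[
\psi(\mu)
=
{n\over \sigma}\, \biggl({\overline{X}-\mu\over\sigma}\biggr)
-
{\alpha\over\sigma}\sum_{i=1}^n {\rm tanh}\biggl[\alpha\biggl({ X_i-\mu\over \sigma}\biggr)\biggr].
\]
Each map $\mu\mapsto \tanh[\alpha(X_i-\mu)/\sigma]$ is continuous, and the affine term is continuous, so $\psi$ is continuous on $\mathbb{R}$. Hence it suffices to exhibit one point where $\psi>0$ and one where $\psi<0$.

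The key observation is that $\vert\tanh(t)\vert\le 1$ for every $t\in\mathbb{R}$, so the sum term is uniformly bounded in $\mu$: $\bigl\vert(\alpha/\sigma)\sum_{i=1}^n \tanh[\alpha(X_i-\mu)/\sigma]\bigr\vert\le n\vert\alpha\vert/\sigma$. The remaining term $(n/\sigma^2)(\overline{X}-\mu)$ is affine in $\mu$ with slope $-n/\sigma^2<0$, hence tends to $+\infty$ as $\mu\to-\infty$ and to $-\infty$ as $\mu\to+\infty$. Adding a bounded quantity does not affect these limits, so $\lim_{\mu\to-\infty}\psi(\mu)=+\infty$ and $\lim_{\mu\to+\infty}\psi(\mu)=-\infty$. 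In particular there are $\mu_-$ with $\psi(\mu_-)>0$ and $\mu_+$ with $\psi(\mu_+)<0$, and the Intermediate Value Theorem gives a root in $(\mu_-,\mu_+)\subset(-\infty,+\infty)$.

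I do not anticipate a genuine obstacle. The two things worth stating carefully are that the bound on the $\tanh$-sum is \emph{uniform} in $\mu$ (which is exactly why the limits of $\psi$ are governed by the linear term), and that the degenerate case $\alpha=0$ is immediate, since \eqref{score} then reduces to $(n/\sigma^2)(\overline{X}-\mu)=0$ with unique root $\mu=\overline{X}$. I would also remark that this argument gives existence but not uniqueness: a uniqueness claim would require $\psi'(\mu)=-n/\sigma^2+(\alpha^2/\sigma^2)\sum_{i=1}^n {\rm sech}^2[\alpha(X_i-\mu)/\sigma]$ to have constant sign, which can fail when $\vert\alpha\vert>1$; but uniqueness is not part of the statement.
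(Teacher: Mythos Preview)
Your argument is correct and follows essentially the same route as the paper: compute the limits $\lim_{\mu\to\mp\infty}\partial l(\boldsymbol{\theta};\boldsymbol{X})/\partial\mu=\pm\infty$ and invoke the Intermediate Value Theorem. You have simply supplied the details the paper omits, namely the uniform bound on the $\tanh$-sum and the continuity of the score in $\mu$.
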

\begin{proof}
One can readily verify that $\lim_{\mu\to\mp\infty} {\partial l(\boldsymbol{\theta};\boldsymbol{X})\over \partial\mu} =\pm\infty$. So, by Intermediate value theorem, there exists at least one solution on the interval $(-\infty, +\infty)$.
\end{proof}

\begin{proposition}
If the parameters $\mu$ and $\sigma$ are known, then the equation \eqref{score-2} has at least one root on the interval $(-\infty, +\infty)$.
\end{proposition}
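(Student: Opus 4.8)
The plan is to mimic the proof of Proposition \ref{prop-existence-roots}: with $\mu$ and $\sigma$ held fixed, regard the right-hand side of \eqref{score-2} as a continuous function of $\alpha$ alone, examine its limits as $\alpha\to\pm\infty$, and apply the Intermediate value theorem. Put $Y_i=(X_i-\mu)/\sigma$ and define
\begin{align*}
\psi(\alpha)=-\alpha n+\sum_{i=1}^{n}Y_i\tanh(\alpha Y_i),
\end{align*}
which is continuous on $\mathbb{R}$ as a finite sum of continuous functions.

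The key step is a uniform bound on the sum term. Since $|\tanh(z)|\le 1$ for all $z$, we have $|Y_i\tanh(\alpha Y_i)|\le |Y_i|$ for every $\alpha$, hence $\bigl|\sum_{i=1}^{n}Y_i\tanh(\alpha Y_i)\bigr|\le \sum_{i=1}^{n}|Y_i|=:C$, a constant independent of $\alpha$. Therefore $-\alpha n-C\le\psi(\alpha)\le-\alpha n+C$, which gives $\lim_{\alpha\to+\infty}\psi(\alpha)=-\infty$ and $\lim_{\alpha\to-\infty}\psi(\alpha)=+\infty$. Since $\psi$ is continuous and takes values of both signs, the Intermediate value theorem produces at least one $\alpha_0\in(-\infty,+\infty)$ with $\psi(\alpha_0)=0$, i.e.\ a root of \eqref{score-2}. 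One could also observe that $\psi$ is an odd function, so that $\alpha=0$ is automatically a root; but the sign-change argument above parallels Proposition \ref{prop-existence-roots} and does not rely on this degeneracy.

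I do not anticipate any real obstacle here. The only point deserving a remark is that some data points may equal $\mu$; for such $i$ the term $Y_i\tanh(\alpha Y_i)$ is identically zero and simply drops out, affecting neither continuity of $\psi$ nor the constant $C$. Because the sum is finite, no exchange of limit and summation is needed, so the limiting behaviour of $\psi$ follows termwise without further justification.
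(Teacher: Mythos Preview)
Your argument is correct and follows exactly the approach of the paper: show that the score in \eqref{score-2}, viewed as a continuous function of $\alpha$, tends to $\mp\infty$ as $\alpha\to\pm\infty$ (your uniform bound via $|\tanh|\le 1$ is precisely what justifies this), and invoke the Intermediate value theorem. Your additional observation that $\psi$ is odd, hence $\alpha=0$ is always a root, is a nice bonus not mentioned in the paper.
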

\begin{proof}
Since $\lim_{\alpha\to\mp \infty} {\partial l(\boldsymbol{\theta};\boldsymbol{X})\over \partial\alpha} =\pm\infty$, the proof follows the same reasoning as Proposition \ref{prop-existence-roots}.
\end{proof}

Now, we calculate the expectation of score defined by \eqref{score}, \eqref{score-1} and\eqref{score-2} when $n=1$. Indeed,
by using the partial derivatives in \eqref{score}-\eqref{score-2}, with $n=1$, and the fact that $x\longmapsto x\cosh(\alpha x)$ and $x\longmapsto\sinh(\alpha x)$ are odd functions, we obtain
\begin{align*}
\mathbb{E}\biggl[{\partial \log f(X;\boldsymbol{\theta})\over \partial \mu }\biggr]
&=
{n\over \sigma}\, 
\mathbb{E}\biggl({X-\mu\over\sigma}\biggr)
-
{\alpha\over\sigma}\, 
\mathbb{E}\biggl\{{\rm tanh}\biggl[\alpha\biggl({ X-\mu\over \sigma}\biggr)\biggr]\biggr\}
\\[0,2cm]
&=
\exp\biggl(-{\alpha^2\over 2}\biggr)
\left\{
{n\over \sigma}\,
\mathbb{E}_{\Phi}
\big[Z\cosh(\alpha Z)\big]
-
{\alpha\over\sigma}\, 
\mathbb{E}_{\Phi}
\big[\sinh(\alpha Z)\big]
\right\}
=0,
\end{align*}
where in the second line the following change of variables $z=(x-\mu)/\sigma$, ${\rm d}z={\rm d}x/\sigma$, was taken.

Analogously, since $
\mathbb{E}_{\Phi}\big[Z^2 \cosh(\alpha Z) \big]=(\alpha^2+1)\exp\big({\alpha^2/ 2}\big)$ and $\mathbb{E}_{\Phi}
\big[Z\sinh(\alpha Z)\big]=\alpha\exp\big({\alpha^2/ 2}\big)$, we get
\begin{align*}
\mathbb{E}\biggl[{\partial \log f(X;\boldsymbol{\theta})\over \partial \sigma }\biggr]
&=
-{1\over\sigma}
+
{1\over \sigma}\, \mathbb{E}\biggl[\biggl({X-\mu\over \sigma}\biggl)^2\, \biggr]
-
{\alpha\over\sigma}\, \mathbb{E}\biggl\{ \biggl({X-\mu\over \sigma}\biggl) {\rm tanh}\biggl[\alpha\biggl({X-\mu\over \sigma}\bigg)\biggr]\biggr\}
\\[0,2cm]
&=
-{1\over\sigma}
+
{1\over \sigma}\, \exp\biggl(-{\alpha^2\over 2}\biggr)
\mathbb{E}_{\Phi}\big[Z^2 \cosh(\alpha Z) \big]
-
{\alpha\over\sigma}\, 
\exp\biggl(-{\alpha^2\over 2}\biggr)
\mathbb{E}_{\Phi}
\big[Z\sinh(\alpha Z)\big]
=0
\end{align*}
and
\begin{align}\label{exp=10}
\mathbb{E}\biggl[{\partial \log f(X;\boldsymbol{\theta})\over \partial \alpha }\biggr]
&=
\mathbb{E}
\biggl\{\biggl({ X-\mu\over \sigma}\biggr)
\tanh\biggl[\alpha\biggl({ X-\mu\over \sigma}\biggr)\biggr]\biggr\}
-\alpha \nonumber
\\[0,2cm]
&=
\exp\biggl(-{\alpha^2\over 2}\biggr)
\mathbb{E}_{\Phi}
\big[Z\sinh(\alpha Z)\big]
-\alpha
\ = \ 
0.
\end{align}

\subsection{Consistence of the MLE $\widehat{\alpha}$}
For the sake of simplicity of presentation, from now on we will assume that {\it $\mu$ and $\sigma$ are known parameters and $\alpha$ is unknown}. We are interested in knowing the large sample properties of  MLE $\widehat{\alpha}$ of the parameter $\alpha$ that generates uni- or bimodality in the BN distribution. We emphasize that similar results can be studied for $\mu$ and $\sigma$ when the other parameters are known.

Since 
\begin{align*}
{\partial^2 f(x;\boldsymbol{\theta})\over \partial \alpha^2}
=
\biggl[\alpha^2+
\biggl({x-\mu\over \sigma}\biggr)^2-1\biggl]
f(x;\boldsymbol{\theta})
-
2\alpha \biggl({x-\mu\over \sigma}\biggr)
\tanh\biggl[\alpha\biggl({x-\mu\over \sigma}\biggr)\biggr]
f(x;\boldsymbol{\theta}) 
\end{align*}
and, since $
\mathbb{E}_{\Phi}\big[Z^2 \cosh(\alpha Z) \big]=(\alpha^2+1)\exp\big({\alpha^2/ 2}\big)$ and $\mathbb{E}_{\Phi}
\big[Z\sinh(\alpha Z)\big]=\alpha\exp\big({\alpha^2/ 2}\big)$,
for $Z\sim N(0,1)$, we have
\begin{align}\label{int-cond}
\int_{-\infty}^{+\infty} {\partial^2 f(x;\boldsymbol{\theta})\over \partial \alpha^2}\, {\rm d}x
&=
\mathbb{E}\biggl[\alpha^2+
\biggl({ X-\mu\over \sigma}\biggr)^2-1\biggl]
-
2\alpha
\mathbb{E}
\biggl\{\biggl({ X-\mu\over \sigma}\biggr)
\tanh\biggl[\alpha\biggl({ X-\mu\over \sigma}\biggr)\biggr]\biggr\}
\nonumber
\\[0,25cm]
&=
\alpha^2+
\exp\biggl(-{\alpha^2\over 2}\biggr)
\mathbb{E}_{\Phi}\big[Z^2 \cosh(\alpha Z)\big]
-1
-
2\alpha 
\exp\biggl(-{\alpha^2\over 2}\biggr)
\mathbb{E}_{\Phi}
\big[Z\sinh(\alpha Z)\big]
\nonumber
\\[0,2cm]
&=
0,
\end{align}
where in the second line the following change of variables $z=(x-\mu)/\sigma$, ${\rm d}z={\rm d}x/\sigma$, was taken.

On the other hand, 
\begin{align}\label{id-sech}
{\partial^2 \log f(x;\boldsymbol{\theta})\over \partial \alpha^2 }
=
\biggl({x-\mu\over \sigma}\biggr)^2
{\rm sech}^2\biggl[\alpha\biggl({x-\mu\over \sigma}\biggr)\biggr] -1 .
\end{align}

Then, by \eqref{int-cond} and \eqref{id-sech},
the Fisher information may also be written as
\begin{align}\label{IF}
\mathcal{I}(\alpha)
=
\mathbb{E}\biggl[{\partial \log f(X;\boldsymbol{\theta})\over \partial \alpha }\biggr]^2 \nonumber
&=
-\mathbb{E}\biggl[{\partial^2 \log f(X;\boldsymbol{\theta})\over \partial \alpha^2 }\biggr]
+
\int_{-\infty}^{+\infty} {\partial^2 f(x;\boldsymbol{\theta})\over \partial \alpha^2}\, {\rm d}x \nonumber
\\[0,3cm]
&{=}
1- \mathbb{E}\biggl\{\biggl({X-\mu\over \sigma}\biggr)^2
{\rm sech}^2\biggl[\alpha\biggl({X-\mu\over \sigma}\biggr)\biggr]\biggl\} \nonumber
\\[0,3cm]
&{=}
1-\exp\biggl(-{\alpha^2\over 2}\biggr)\mathbb{E}_{\Phi}
\big[Z^2{\rm sech}(\alpha Z)\big].
\end{align}

\begin{theorem}[Consistence]\label{consistence}
Let us suppose that $\mu$ and $\sigma$ are known parameters and $\alpha$ unknown.
Let $\Theta=\{\alpha\in\mathbb{R}:\vert \alpha\vert >0\}$ be the parameter space. Then, with probability approaching 1, as $n\to +\infty$, the log-likelihood equation ${\partial l(\boldsymbol{\theta};\boldsymbol{X})/ \partial \alpha }= 0$ has a consistent solution, denoted by $\widehat{\alpha}$.
\end{theorem}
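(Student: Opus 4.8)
The plan is to invoke the classical Cram\'er argument for the existence of a consistent root of the likelihood equation. Fix the true parameter $\alpha_0\in\Theta$ and write $\boldsymbol\theta_0=(\mu,\sigma,\alpha_0)$; throughout, $\mu$ and $\sigma$ are the known values and we abbreviate $l(\alpha;\boldsymbol X)=l(\mu,\sigma,\alpha;\boldsymbol X)$. Choose $\varepsilon$ with $0<\varepsilon<|\alpha_0|$, so that the closed interval $Q_\varepsilon=[\alpha_0-\varepsilon,\alpha_0+\varepsilon]$ is contained in $\Theta$, stays on one side of the origin, and satisfies $\alpha_0\pm\varepsilon\notin\{\alpha_0,-\alpha_0\}$. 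From the explicit log-likelihood, the map $\alpha\mapsto l(\alpha;\boldsymbol X)$ is $C^{\infty}$ on $Q_\varepsilon$ (the hyperbolic cosine never vanishes). The first step is to note that, for each fixed $\alpha$, the weak law of large numbers gives
\[
\frac{1}{n}\,\big[l(\alpha;\boldsymbol X)-l(\alpha_0;\boldsymbol X)\big]
\ \xrightarrow{\ \mathbb{P}\ }\
\mathbb{E}_{\boldsymbol\theta_0}\!\left[\log\frac{f(X;\mu,\sigma,\alpha)}{f(X;\mu,\sigma,\alpha_0)}\right]
=: -K(\alpha_0,\alpha),
\]
the integrability required being immediate because, up to additive constants, $\log f(X;\mu,\sigma,\alpha)$ is a combination of $\big((X-\mu)/\sigma\big)^2$ and $\log\cosh\!\big[\alpha(X-\mu)/\sigma\big]\le|\alpha|\,|X-\mu|/\sigma$, both of which have finite expectation under the BN law by properties (P.2) and (P.8).

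The second step is the information inequality: $K(\alpha_0,\alpha)\ge 0$, with equality if and only if $f(\cdot;\mu,\sigma,\alpha)=f(\cdot;\mu,\sigma,\alpha_0)$ almost everywhere. Since $\alpha_0\pm\varepsilon\neq\pm\alpha_0$, the mixture representation \eqref{eq-eq-dens} together with the identifiability of Section \ref{Identifiability} shows these densities are genuinely distinct, whence $K(\alpha_0,\alpha_0-\varepsilon)>0$ and $K(\alpha_0,\alpha_0+\varepsilon)>0$. Consequently the event
\[
A_n(\varepsilon)=\big\{\,l(\alpha_0;\boldsymbol X)>l(\alpha_0-\varepsilon;\boldsymbol X)\,\big\}\cap\big\{\,l(\alpha_0;\boldsymbol X)>l(\alpha_0+\varepsilon;\boldsymbol X)\,\big\}
\]
has $\mathbb{P}(A_n(\varepsilon))\to 1$ as $n\to+\infty$. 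On $A_n(\varepsilon)$ the continuous function $l(\cdot;\boldsymbol X)$ attains its maximum over the compact interval $Q_\varepsilon$ at some interior point $\widehat\alpha_n(\varepsilon)\in(\alpha_0-\varepsilon,\alpha_0+\varepsilon)$, and differentiability forces $\partial l(\boldsymbol\theta;\boldsymbol X)/\partial\alpha=0$ there, i.e.\ $\widehat\alpha_n(\varepsilon)$ solves \eqref{score-2}. (Equivalently, one may expand $n^{-1}\partial l/\partial\alpha$ to second order about $\alpha_0$, use $\mathbb{E}[\partial\log f/\partial\alpha]=0$ from \eqref{exp=10} and $\mathbb{E}[\partial^2\log f/\partial\alpha^2]=-\mathcal I(\alpha_0)<0$ from \eqref{IF} to conclude the score is positive at $\alpha_0-\varepsilon$ and negative at $\alpha_0+\varepsilon$ with probability tending to $1$, and apply the Intermediate value theorem.)

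The last step is the standard passage from ``for each $\varepsilon$'' to a single sequence: for each $k\in\mathbb{N}$ pick $n_k$ with $\mathbb{P}(A_n(1/k))\ge 1-1/k$ for all $n\ge n_k$ (and $n_k$ increasing), and for $n_k\le n<n_{k+1}$ define $\widehat\alpha$ to be a root of \eqref{score-2} lying in $(\alpha_0-1/k,\alpha_0+1/k)$ whenever one exists; this yields a sequence of solutions with $\widehat\alpha\xrightarrow{\ \mathbb{P}\ }\alpha_0$. I expect the only real friction to be bookkeeping rather than ideas: verifying the moment conditions behind the weak law (handled by (P.2)/(P.8) and the Gaussian-type tails of the BN law), and keeping track of the harmless symmetry $f(\cdot;\mu,\sigma,\alpha)=f(\cdot;\mu,\sigma,-\alpha)$ --- it is precisely to avoid this (and the degeneracy $\mathcal I(0)=0$) that $0$ is excluded from $\Theta$ and $\varepsilon$ is taken smaller than $|\alpha_0|$, after which the argument is purely local and unaffected.
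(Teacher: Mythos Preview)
Your argument is correct, and like the paper it rests on Cram\'er's classical consistency theorem, but the two presentations differ in what they actually verify. The paper simply checks Cram\'er's three regularity conditions and cites \cite{cra:46}: (i) $\mathbb{E}[\partial\log f/\partial\alpha]=0$ from \eqref{exp=10}; (ii) $-\infty<\mathbb{E}[\partial^2\log f/\partial\alpha^2]<0$ via the bound $\exp(-\alpha^2/2)\,\mathbb{E}_\Phi[Z^2\,\mathrm{sech}(\alpha Z)]<1$ for $\alpha\neq 0$; and (iii) a third-derivative domination $|\partial^3\log f/\partial\alpha^3|\le 2|(x-\mu)/\sigma|^3$, which is integrable. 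You instead run Cram\'er's argument from first principles via the information inequality: WLLN for $n^{-1}[l(\alpha)-l(\alpha_0)]$, strict positivity of the Kullback--Leibler divergence at $\alpha_0\pm\varepsilon$, and an interior-maximum/score-root step, followed by a diagonal extraction. This buys you two things the paper's route does not: you avoid any third-derivative bound altogether, and you confront the sign symmetry $f(\cdot;\mu,\sigma,\alpha)=f(\cdot;\mu,\sigma,-\alpha)$ head-on by taking $\varepsilon<|\alpha_0|$ so that neither endpoint falls at $-\alpha_0$ (a point the paper's identifiability section glosses over). Conversely, the paper's version is shorter and more modular, delegating the analytic work to the cited theorem once the three conditions are in hand. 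Your parenthetical Taylor-expansion alternative is essentially the paper's route in compressed form.
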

\begin{proof}
Since ${\partial \log f(x;\boldsymbol{\theta})/ \partial \alpha }$, ${\partial^2 \log f(x;\boldsymbol{\theta})/ \partial \alpha^2 }$, ${\partial^3 \log f(x;\boldsymbol{\theta})/ \partial \alpha^3 }$ exist for all $\alpha\in\Theta$ and every $x$, by \cite{cra:46} it is sufficient to prove that:
\begin{enumerate}
	\item $\mathbb{E}[{\partial \log f(X;\boldsymbol{\theta})/ \partial \alpha }]=0$ for all $\alpha\in\Theta$;
	\item $-\infty<\mathbb{E}[{\partial^2 \log f(X;\boldsymbol{\theta})/ \partial \alpha^2 }]<0$ for all $\alpha\in\Theta$;
	\item There exists a function $H(x)$ such that for all $\alpha\in\Theta$,
	\begin{align*}
	\biggl\vert {\partial^3 \log f(x;\boldsymbol{\theta})\over \partial \alpha^3 }\biggl\vert <H(x) 
	\quad \text{and}\quad 
	\mathbb{E}[H(X)]<\infty.
	\end{align*}
\end{enumerate}

In what follows we show the validity of Items 1, 2 and 3 above.

By \eqref{exp=10}, the statement of Item 1 follows.

In order to verify the second item,
note that, 
$\exp\bigl(-{\alpha^2/ 2}\bigr)\mathbb{E}_{\Phi}
\big[Z^2{\rm sech}(\alpha Z)\big]\leq \mathbb{E}_{\Phi}(Z^2)=1$ for all $\alpha\in\Theta$.
Moreover, the two sides are equal if and only if $\alpha=0$. Since $\alpha\in\Theta$ (that is, $\alpha\neq 0$), it follows that
$\exp\big(-{\alpha^2/ 2}\big)\mathbb{E}_{\Phi}
\big[Z^2{\rm sech}(\alpha Z)\big]<1$. Hence,
%
\begin{align}\label{exp-finite}
-1\leq 
\mathbb{E}\biggl[{\partial^2 \log f(X;\boldsymbol{\theta})\over \partial \alpha^2 }\biggr]
\stackrel{\eqref{IF}}{=}
\exp\biggl(-{\alpha^2\over 2}\biggr)\mathbb{E}_{\Phi}
\big[Z^2{\rm sech}(\alpha Z)\big]-1
<0.
\end{align}
Then Item 2 is valid.

Finally, since $\vert{\rm sech}^2(\alpha x)\vert \leq 1$ and $\vert{\rm tanh}(\alpha x)\vert\leq 1$, 
\begin{align}\label{3condition}
\biggl\vert{\partial^3 \log f(x;\boldsymbol{\theta})\over \partial \alpha^3 }\biggr\vert
&=
\Biggl\vert 
2\biggl({X-\mu\over \sigma}\biggr)^3{\rm sech}^2\biggl[\alpha\biggl({X-\mu\over \sigma}\biggr)\biggr] {\rm tanh}\biggl[\alpha\biggl({X-\mu\over \sigma}\biggr)\biggr]\Biggr\vert
\nonumber
\\[0,3cm]
&\leq 
2\biggl\vert \biggl({X-\mu\over \sigma}\biggr)\biggr\vert^3=H(x),
\quad \text{with} \quad \mathbb{E}[H(X)]<\infty.
\end{align}

Thus we have complete the proof.
\end{proof}

The following simple result further supports the intuitive appeal of the MLE \citep{baj:71}.
\begin{proposition}
Under hypothesis of Theorem \ref{consistence} it holds:
\begin{align*}
\lim_{n\to+\infty} 
\int_{\mathbb{R}^n} 
\mathbbm{1}_{\displaystyle\{\boldsymbol{y}\in \mathbb{R}^n: \exp[l(\boldsymbol{\theta}';\boldsymbol{y})]>\exp[l(\boldsymbol{\theta};\boldsymbol{y})]\}} (\boldsymbol{x}) \exp[l(\boldsymbol{\theta}';\boldsymbol{x})]\, 
{\rm d} \boldsymbol{x}
=1,
\end{align*} 
for any $\boldsymbol{\theta}=(\mu,\sigma,\alpha)$, $\boldsymbol{\theta}'=(\mu,\sigma,\alpha')\in\Theta$ with $\alpha\neq\alpha'$. Here, $\mathbbm{1}_A(x)$ is the indicator function of a set $A$ having the value 1 for all $x$ in $A$ and the value 0 for all $x$ not in $A$.
\end{proposition}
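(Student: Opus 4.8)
The plan is to recognize the displayed integral as the probability, under the density $\exp[l(\boldsymbol{\theta}';\cdot)]$ corresponding to the true parameter $\boldsymbol{\theta}'$, of the event that the likelihood at $\boldsymbol{\theta}'$ strictly exceeds the likelihood at $\boldsymbol{\theta}$. Writing $\boldsymbol{X}=(X_1,\dots,X_n)$ for an i.i.d.\ sample from $\mathrm{BN}(\mu,\sigma,\alpha')$, the quantity to be shown to converge to $1$ is
\begin{align*}
\mathbb{P}_{\boldsymbol{\theta}'}\bigl(l(\boldsymbol{\theta}';\boldsymbol{X})>l(\boldsymbol{\theta};\boldsymbol{X})\bigr)
=
\mathbb{P}_{\boldsymbol{\theta}'}\biggl(\frac1n\sum_{i=1}^n\log\frac{f(X_i;\boldsymbol{\theta}')}{f(X_i;\boldsymbol{\theta})}>0\biggr).
\end{align*}
So the statement is exactly the ``Wald-type'' consistency fact that the true parameter eventually wins the likelihood comparison against any fixed competitor.

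First I would invoke the (strong, or even weak) law of large numbers: since the $X_i$ are i.i.d.\ under $\boldsymbol{\theta}'$, the sample average $\frac1n\sum_{i=1}^n\log[f(X_i;\boldsymbol{\theta}')/f(X_i;\boldsymbol{\theta})]$ converges in probability to $\mathbb{E}_{\boldsymbol{\theta}'}\{\log[f(X;\boldsymbol{\theta}')/f(X;\boldsymbol{\theta})]\}$, which is the Kullback--Leibler divergence $D(\boldsymbol{\theta}'\,\|\,\boldsymbol{\theta})$. I should first check the expectation is well defined: $\log[f(x;\boldsymbol{\theta}')/f(x;\boldsymbol{\theta})]$ is, for both parameters sharing the same $(\mu,\sigma)$, equal to $\tfrac{\alpha^2-\alpha'^2}{2}+\log\cosh[\alpha'(x-\mu)/\sigma]-\log\cosh[\alpha(x-\mu)/\sigma]$, and since $|\log\cosh t|\le |t|$ this is dominated by an affine function of $|X-\mu|/\sigma$, which is integrable under the BN law (finite moments, by the moment formulas of Section~\ref{sec:stoch:mom}); so $D(\boldsymbol{\theta}'\,\|\,\boldsymbol{\theta})$ is finite. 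Next, by the standard Jensen/Gibbs argument, $D(\boldsymbol{\theta}'\,\|\,\boldsymbol{\theta})\ge 0$ with equality if and only if $f(\cdot;\boldsymbol{\theta}')=f(\cdot;\boldsymbol{\theta})$ almost everywhere; by the identifiability result (Proposition in Section~\ref{Identifiability}), since $\alpha\neq\alpha'$ this forces $D(\boldsymbol{\theta}'\,\|\,\boldsymbol{\theta})>0$. Denote $\delta:=D(\boldsymbol{\theta}'\,\|\,\boldsymbol{\theta})>0$.

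Then the conclusion is immediate: the event $\{\frac1n\sum_{i=1}^n\log[f(X_i;\boldsymbol{\theta}')/f(X_i;\boldsymbol{\theta})]>0\}$ contains the event $\{|\frac1n\sum_{i=1}^n\log[f(X_i;\boldsymbol{\theta}')/f(X_i;\boldsymbol{\theta})]-\delta|<\delta\}$, whose probability tends to $1$ by the law of large numbers. Hence the probability of the former also tends to $1$, which is precisely the assertion. I would close by rewriting this probability back in the integral notation of the statement, noting that $\exp[l(\boldsymbol{\theta}';\boldsymbol{x})]$ is the joint density of $\boldsymbol{X}$ on $\mathbb{R}^n$ and the indicator picks out the set where $\boldsymbol{\theta}'$ has strictly larger likelihood, so the integral equals $\mathbb{P}_{\boldsymbol{\theta}'}(l(\boldsymbol{\theta}';\boldsymbol{X})>l(\boldsymbol{\theta};\boldsymbol{X}))\to 1$.

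The only genuinely delicate point is the strict positivity of the KL divergence, and this is where identifiability (already established in the paper) does the work; the integrability check is routine given the explicit form of the log-density and the existence of moments, and everything else is the law of large numbers. I do not expect any real obstacle beyond being careful that the dominating function is integrable under $\boldsymbol{\theta}'$ (not merely under $\boldsymbol{\theta}'$-a.e.\ finiteness), which follows from property (P.8) and the moment propositions.
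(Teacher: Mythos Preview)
Your proposal is correct and is, in substance, exactly the argument the paper invokes: the paper's proof is a one-liner that appeals to i.i.d.\ sampling, the identifiability result of Section~\ref{Identifiability}, and then cites \cite{baj:71} for the conclusion. You have simply unpacked the Bahadur argument (law of large numbers applied to the log-likelihood ratio, with strict positivity of the Kullback--Leibler divergence coming from identifiability), so the two proofs use the same ingredients and the same route, yours being self-contained where the paper defers to the reference.
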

\begin{proof}
Since $\boldsymbol{X}=(X_1,\ldots, X_n)$ is a random sample of $X\sim BN(\boldsymbol{\theta})$,
$X_1, \ldots ,X_n$ are independent and identically distributed with density $f (x;\boldsymbol{\theta}), \boldsymbol{\theta}\in\Theta$; and since the BN distribution is identifiable (see Section \ref{Identifiability}), by \cite{baj:71} the proof follows.
\end{proof}

\subsection{Central limit theorem for the MLE $\widehat{\alpha}$}

In this section we state a Central limit theorem (CLT) for the MLE $\widehat{\alpha}$, which is important for studying confidence intervals and hypothesis tests, for example.

Note that, under hypothesis of Theorem \ref{consistence}, the following conditions are satisfied:

\begin{enumerate}
\item[(A.1)] The mapping $\alpha \longmapsto f(x; \boldsymbol{\theta})$ is three times continuously differentiable on $\Theta$, $\forall x\in\mathbb{R}$;
\item[(A.2)] By \eqref{exp=10}, $\int_{-\infty}^{+\infty} {\partial f(x;\boldsymbol{\theta})\over \partial \alpha}\, {\rm d}x=\mathbb{E}\big[{\partial \log f(X;\boldsymbol{\theta})\over \partial \alpha }\big]=0$ and, by \eqref{int-cond},
$
\int_{-\infty}^{+\infty} {\partial^2 f(x;\boldsymbol{\theta})\over \partial \alpha^2}\, {\rm d}x=0;
$
\item[(A.3)]  By \eqref{IF} and \eqref{exp-finite},
$
0<
\mathcal{I}(\alpha)
=
1- \mathbb{E}\big[X^2{\rm sech}^2(\alpha X)\big]
\leq 1
$,
$\forall \alpha\in\Theta$;
\item[(A.4)] 
By \eqref{3condition},
there exists a function $H(x)$ such that for all $\alpha\in\Theta$,
\begin{align*}
\biggl\vert {\partial^3 \log f(x;\boldsymbol{\theta})\over \partial \alpha^3 }\biggl\vert <H(x) 
\quad \text{and}\quad 
\mathbb{E}[H(X)]<\infty;
\end{align*}
\item[(A.5)] By Theorem \ref{consistence}, the log-likelihood equation ${\partial l(\boldsymbol{\theta};\boldsymbol{X})/ \partial \alpha }= 0$ has a consistent solution $\widehat{\alpha}$.
\end{enumerate}
Since conditions (A.1)-(A.5) are satisfied, by \cite{cra:46} the following result follows:

\begin{theorem}[CLT for the MLE]
Under hypothesis of Theorem \ref{consistence},
it holds that, $\sqrt{n}(\widehat{\alpha}- \alpha)$ converges in distribution to $N(0, 1/\mathcal{I}(\alpha))$ as $n\to +\infty$.
\end{theorem}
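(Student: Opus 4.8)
The plan is to verify that the hypotheses of the standard Cramér-type asymptotic normality theorem (see \cite{cra:46}) hold in the present one-parameter setting, and then quote that theorem directly. The excerpt has in fact already isolated exactly the conditions (A.1)--(A.5) that such a theorem requires, so the work is essentially bookkeeping: each of those five conditions has been established in the run-up to the statement. Condition (A.1) is the smoothness of $\alpha\longmapsto f(x;\boldsymbol{\theta})$, which is immediate since $f$ is built from exponentials and $\cosh$. Conditions (A.2) and (A.3) are the interchange-of-differentiation-and-integration identities together with the positivity and finiteness of the Fisher information $\mathcal{I}(\alpha)$, both proved via \eqref{exp=10}, \eqref{int-cond}, \eqref{IF} and \eqref{exp-finite}. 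Condition (A.4) is the uniform-in-$\alpha$ domination of the third log-derivative by an integrable envelope $H(x)=2\vert (X-\mu)/\sigma\vert^3$, established in \eqref{3condition}. Condition (A.5) is precisely the content of Theorem \ref{consistence}: with probability tending to $1$ there is a consistent root $\widehat{\alpha}$ of the score equation.

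Given these, I would carry out the argument in the following order. First, restate that $X_1,\dots,X_n$ are i.i.d.\ from $f(\cdot;\boldsymbol{\theta})$ and fix the true value $\alpha\in\Theta$. Second, recall from Theorem \ref{consistence} the consistent MLE $\widehat{\alpha}$, so that $\widehat{\alpha}\to\alpha$ in probability. Third, perform a second-order Taylor expansion of the score $\partial l(\boldsymbol{\theta};\boldsymbol{X})/\partial\alpha$ about $\alpha$, evaluated at $\widehat{\alpha}$; since $\widehat{\alpha}$ is a root of the score, this gives
\begin{align*}
0
=
{1\over\sqrt n}\,{\partial l\over\partial\alpha}\Big|_{\alpha}
+
(\widehat{\alpha}-\alpha)\,{1\over n}\,{\partial^2 l\over\partial\alpha^2}\Big|_{\alpha}
+
{1\over 2}\,(\widehat{\alpha}-\alpha)^2\,{1\over n}\,{\partial^3 l\over\partial\alpha^3}\Big|_{\alpha^*},
\end{align*}
for some $\alpha^*$ between $\widehat{\alpha}$ and $\alpha$. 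Fourth, handle the three terms: the first is $\sqrt n$ times an average of mean-zero (by (A.2)), finite-variance (variance $=\mathcal{I}(\alpha)$ by (A.3)) i.i.d.\ summands, so by the classical Lindeberg--Lévy CLT it converges in distribution to $N(0,\mathcal{I}(\alpha))$; the coefficient $n^{-1}\partial^2 l/\partial\alpha^2|_{\alpha}$ converges in probability to $\mathbb{E}[\partial^2\log f(X;\boldsymbol{\theta})/\partial\alpha^2]=-\mathcal{I}(\alpha)$ by the weak law of large numbers and \eqref{IF}; and the remainder is bounded by $\tfrac12(\widehat{\alpha}-\alpha)^2\cdot n^{-1}\sum_i H(X_i)$, where $n^{-1}\sum_i H(X_i)\to\mathbb{E}[H(X)]<\infty$ in probability by (A.4) and $(\widehat{\alpha}-\alpha)^2\to 0$, so the remainder is $o_p(\widehat{\alpha}-\alpha)$. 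Fifth, solve for $\sqrt n(\widehat{\alpha}-\alpha)$ and apply Slutsky's theorem to conclude convergence in distribution to $N(0,1/\mathcal{I}(\alpha))$.

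I do not anticipate a genuine obstacle, since the paper has deliberately pre-verified every regularity condition; the only point needing mild care is making the standard $o_p$ bookkeeping in the Taylor remainder precise (that $\alpha^*$ lies in $\Theta$ eventually, and that the envelope bound is uniform over a neighborhood of $\alpha$), but this is exactly what (A.4) and (A.5) are designed to supply, and it is subsumed in the cited result of \cite{cra:46}. Consequently the cleanest write-up is simply to observe that (A.1)--(A.5) are the hypotheses of the Cramér asymptotic-normality theorem and invoke it; the Taylor-expansion sketch above is the content of that theorem and can be included or omitted as desired.
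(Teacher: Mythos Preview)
Your proposal is correct and follows precisely the paper's approach: the paper simply observes that conditions (A.1)--(A.5) have already been verified and then invokes \cite{cra:46} directly. Your additional Taylor-expansion sketch is more detailed than what the paper provides, but it is exactly the content of the cited Cram\'er result and is fully consistent with it.
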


%

\section{The bivariate BN distribution}\label{sec:bivariate}
We said that a real random vector $\boldsymbol{X}=(X_1,X_2)$ has bivariate BN (BBN) distribution with parameter vector parameter $\boldsymbol{\psi }=(\mu_1,\mu_2,\sigma_1,\sigma_2,\alpha)$, $\mu_i\in\mathbb{R}$, $\sigma_i>0$, $\alpha\in \mathbb{R}$, denoted by $\boldsymbol{X}\sim \text{BBN}(\boldsymbol{\psi })$, if its PDF is given by, for each  ${\bf x}=(x_1, x_2)\in\mathbb{R}^2$,
\begin{eqnarray*}
f({\bf x};\boldsymbol{\psi })
=
{\exp[{\alpha^2(\rho^2-2)/ 2}]\over\sigma_1\sigma_2}\,
\phi\biggl({x_1-\mu_1\over \sigma_1},{x_2-\mu_2\over \sigma_2};\rho\biggr)
\cosh
\biggl[ \alpha\biggl({x_1-\mu_1\over \sigma_1}\biggr)+ \alpha(1-\rho)\biggl({x_2-\mu_2\over \sigma_2}\biggr) \biggr],
\end{eqnarray*}
where $\rho\in(-1,1)$  and
\begin{align*}
\phi({\bf z};\rho)=\dfrac{1}{2\pi\sqrt{1-\rho^2}}\,
\exp\biggl[-{1\over 2 (1-\rho^2)}\big( 
z_1^2 - 2\rho z_1z_2  +z_2^2
\big)\biggr],
\quad {\bf z}=(z_1,z_2),
\end{align*}
is the  PDF of the standard bivariate normal distribution with correlation coefficient $\rho$.

A simple algebraic manipulation shows that
\begin{align}\label{ident-binormal}
{1\over\sigma_1\sigma_2}\,
\phi\biggl({x_1-\mu_1\over \sigma_1},{x_2-\mu_2\over \sigma_2};\rho\biggr)
=
\phi_{\widetilde{\mu}_1, \widetilde{\sigma}_1^2}(x_1) \,
\phi_{\mu_2,\sigma_2^2}(x_2),
\end{align}
where
\begin{align*}
\widetilde{\mu}_1=\mu_1+\rho\sigma_1\biggl({x_2-\mu_2\over\sigma_2}\biggr)
\quad \text{and} \quad 
\widetilde{\sigma}_1^2=\sigma_1^2(1-\rho^2).
\end{align*}
Consequently
\begin{align*}
	\int_{-\infty}^{\infty}f({\bf x};\boldsymbol{\psi})\, {\rm d}x_1
	=
	f(x_2;\boldsymbol{\theta}_{2}) 
	\quad \text{and}\quad
	\int_{-\infty}^{\infty}f({\bf x};\boldsymbol{\psi})\, {\rm d}x_2
=
f(x_1;\boldsymbol{\theta}_{1}),
\end{align*}
where $f(x_i;\boldsymbol{\theta}_{i})$ is the PDF of the BN distribution \eqref{density-BN} with parameter vector $\boldsymbol{\theta}_{i}=(\mu_i,\sigma_i,\alpha)$, $i=1,2$. In other words, If $\boldsymbol{X}=(X_1,X_2)\sim \text{BBN}(\boldsymbol{\psi })$ then $X_1\sim {\rm BN}(\boldsymbol{\theta}_1)$ and $X_2\sim {\rm BN}(\boldsymbol{\theta}_2)$.

By using \eqref{ident-binormal}, a laborious algebraic calculation gives the following 
\begin{align*}
\mathbb{E}(X_1\vert X_2=x_2)
=
\mu_1
+
\rho \sigma_1 \biggl({ x_2-\mu_2\over \sigma_2}\biggr)
+
(1-\rho^2)\sigma_1
{\rm tanh}\biggl[\alpha\biggl({ x_2-\mu_2\over \sigma_2}\biggr)\biggr].
\end{align*}
That is, 
\begin{align*}
\mathbb{E}(X_1\vert X_2)
=
\mu_1
+
\rho \sigma_1 \biggl({ X_2-\mu_2\over \sigma_2}\biggr)
+
(1-\rho^2)\sigma_1
{\rm tanh}\biggl[\alpha\biggl({ X_2-\mu_2\over \sigma_2}\biggr)\biggr] \quad \text{a.s.}
\end{align*}
In consequence,
\begin{align*}
\mathbb{E}(X_1X_2)&=\mathbb{E}[X_2\mathbb{E}(X_1\vert X_2)]
\\[0,2cm]
&=
\mu_1\mathbb{E}(X_2)
+
\rho \sigma_1 \mathbb{E}\biggl[X_2\biggl({ X_2-\mu_2\over \sigma_2}\biggr)\biggr]
+
(1-\rho^2) \sigma_1
\mathbb{E}\biggl\{X_2
{\rm tanh}\biggl[\alpha\biggl({ X_2-\mu_2\over \sigma_2}\biggr)\biggr]
\biggr\}.
\end{align*}
Since $X_2\sim {\rm BN}(\boldsymbol{\theta}_2)$ we get
\begin{align*}
\mathbb{E}(X_1X_2)=
\mu_1\mu_2+\rho\sigma_1\sigma_2(1+\alpha^2)+(1-\rho^2)\sigma_1\sigma_2\alpha.
\end{align*}
Hence, since $\mathbb{E}(X_i)=\mu_i$ and ${\rm Var}(X_i)=\sigma^2_i(1+\alpha^2)$ (see properties P.7 and P.8 in Section \ref{sec:pre:prop}),
\begin{align}
{\rm Cov}(X_1,X_2)
&=
\sigma_1\sigma_2[
\rho(1+\alpha^2)+(1-\rho^2)\alpha]; \label{cov}
\\[0,2cm]
{\rm \rho}(X_1,X_2)&=
\dfrac{
	\rho(1+\alpha^2)+(1-\rho^2)\alpha}{(1+\alpha^2)}. \nonumber
\end{align}
The covariance matrix is given by
\begin{align*}
\Sigma=
\begin{bmatrix}
\sigma^2_1(1+\alpha^2) & \sigma_1\sigma_2[
\rho(1+\alpha^2)+(1-\rho^2)\alpha] \\[0,3cm]
\sigma_1\sigma_2[
\rho(1+\alpha^2)+(1-\rho^2)\alpha]   &  \sigma^2_2(1+\alpha^2) 
\end{bmatrix}.
\end{align*}

Some immediate observations are as follows:
\begin{itemize}
	\item
When $\alpha=0$ we have the following known facts corresponding to    bivariate normal distribution: ${\rm Cov}(X_1,X_2)=\rho\sigma_1\sigma_2$ and ${\rm \rho}(X_1,X_2)=\rho$.

	\item
When $\rho=0$ we have ${\rm Cov}(X_1,X_2)=\sigma_1\sigma_2\alpha$ and ${\rm \rho}(X_1,X_2)=\alpha/(1+\alpha^2)$.

	\item
When $\rho=\alpha=0$, $X_1$ and $X_2$ are independent.
\end{itemize}

\section{Stationarity and ergodicity}\label{sec:stat:ergo}

\subsection{Non-stationarity of the BN random process}

\begin{definition}
A process $X_t$ is strict-sense stationary (SSS) if its finite-dimensional distributions at times $t_1<\cdots<t_n$, $\forall n\in\mathbb{N}$, are the same after any
time interval of length time interval of length $t_0$. In other words, for each $n\in\mathbb{N}$ and $t_1<\cdots<t_n$ and $(x_1,\dots,x_n)\in\mathbb{R}^n$ we have
\begin{align*}
\mathbb{P}(X_{t_1+t_0}\leq x_1,\dots,X_{t_n+t_0}\leq x_n)
=
\mathbb{P}(X_{t_1}\leq x_1,\dots,X_{t_n}\leq x_n),
\end{align*}
for any time $t_0$.
\end{definition}

We said that a process $X_t$ is a  BN random process if $X_t\sim \text{BN}(\boldsymbol{\theta}_t)$, where $\boldsymbol{\theta}_t=(\mu_t,\sigma_t,\alpha)$, $\mu_t\in\mathbb{R}$, $\sigma_t>0$ and $\alpha\in \mathbb{R}$. 

\begin{proposition}\label{not-SSS}
The BN random process is not SSS when $\mu_t$ and $\sigma_t$ are not independent of time.
\end{proposition}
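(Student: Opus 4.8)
The plan is to reduce strict-sense stationarity to its weakest consequence — invariance of the one-dimensional marginal laws under time shifts — and then contradict this using the elementary moment formulas already recorded for the BN distribution. First I would observe that, taking $n=1$ in the definition of SSS, any SSS process must satisfy $\mathbb{P}(X_{t_1+t_0}\leq x)=\mathbb{P}(X_{t_1}\leq x)$ for every $x\in\mathbb{R}$ and every $t_1,t_0$; equivalently, $X_{t_1+t_0}$ and $X_{t_1}$ must have the same law, so in particular every moment of $X_t$ that exists must be constant in $t$.

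Next I would invoke properties P.7 and P.8: since $X_t\sim \text{BN}(\boldsymbol{\theta}_t)$ with $\boldsymbol{\theta}_t=(\mu_t,\sigma_t,\alpha)$, we have $\mathbb{E}(X_t)=\mu_t$ and ${\rm Var}(X_t)=\sigma_t^2(1+\alpha^2)$, both finite. The hypothesis that $\mu_t$ and $\sigma_t$ are not independent of time means that the map $t\longmapsto(\mu_t,\sigma_t)$ is non-constant, so there exist a time $t_1$ and a shift $t_0$ with $(\mu_{t_1+t_0},\sigma_{t_1+t_0})\neq(\mu_{t_1},\sigma_{t_1})$. If $\mu_{t_1+t_0}\neq\mu_{t_1}$, then $\mathbb{E}(X_{t_1+t_0})=\mu_{t_1+t_0}\neq\mu_{t_1}=\mathbb{E}(X_{t_1})$; if instead $\sigma_{t_1+t_0}\neq\sigma_{t_1}$, then, because $1+\alpha^2>0$, ${\rm Var}(X_{t_1+t_0})=\sigma_{t_1+t_0}^2(1+\alpha^2)\neq\sigma_{t_1}^2(1+\alpha^2)={\rm Var}(X_{t_1})$. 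In either case the one-dimensional marginals at $t_1$ and $t_1+t_0$ differ, so the $n=1$ instance of the SSS condition already fails, and hence the BN random process is not SSS.

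As a sharper alternative to the moment computation I would note that the same contradiction follows at once from identifiability of the BN family (the proposition in Section \ref{Identifiability}): $X_{t_1+t_0}$ and $X_{t_1}$ having the same law would force $\boldsymbol{\theta}_{t_1+t_0}=\boldsymbol{\theta}_{t_1}$, i.e.\ $(\mu_{t_1+t_0},\sigma_{t_1+t_0})=(\mu_{t_1},\sigma_{t_1})$, contrary to hypothesis. I do not anticipate any genuine obstacle; the only points deserving a line of care are the interpretation of ``not independent of time'' (it suffices that at least one of $\mu_t$, $\sigma_t$ be non-constant, and the two cases are handled separately as above) and the remark that $1+\alpha^2$ never vanishes, so that a change in $\sigma_t$ really does change the variance no matter what the fixed value of $\alpha$ is.
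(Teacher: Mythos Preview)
Your proposal is correct and follows essentially the same approach as the paper: reduce SSS to invariance of the one-dimensional marginals and contradict this via the moment formulas $\mathbb{E}(X_t)=\mu_t$ and ${\rm Var}(X_t)=\sigma_t^2(1+\alpha^2)$ from properties P.7 and P.8. Your version is in fact more careful than the paper's (you separate the cases $\mu_t$ non-constant versus $\sigma_t$ non-constant and note $1+\alpha^2>0$, and you add the identifiability alternative), but the underlying idea is identical.
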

\begin{proof}
If a random process is SSS, then all expected values of functions of the random process, must also be stationary. Since $\mathbb{E}(X_t)=\mu_t$ and ${\rm Var}(X_t)=\sigma^2_t(1+\alpha^2)$ (see properties P.7 and P.8 in Section \ref{sec:pre:prop}) change in time, we have that the PDF change with time. Then the not stationarity of random process follows.
%
\end{proof}

\begin{definition}
A process $X_t$ is weak-sense stationary (WSS) if:
\begin{itemize}
\item $\mathbb{E}(X_t)=\mu$  is independent of time;
\item $\mathbb{E}(X_t^2)<\infty$;
\item $C_X(t,s)={\rm Cov}(X_t,X_s)$ only depends on the distance 
between the times considered.
\end{itemize}
\end{definition}

If $X_t$ is a BN random process,  it is known that $\mathbb{E}(X_t)=\mu_t$, $\mathbb{E}(X_t^2)=\sigma_t^2(1+\alpha^2)+\mu_t^2$ (see Section \ref{sec:pre:prop}) and that $C_X(t,s)\stackrel{\eqref{cov}}{=}\sigma_t\sigma_s[
\rho(1+\alpha^2)+(1-\rho^2)\alpha]$. Then the next result follows.

\begin{proposition}\label{not-WSS}
	The BN random process is not WSS when $\mu_t$ and $\sigma_t$ are not independent of time.
\end{proposition}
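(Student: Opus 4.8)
The plan is to mirror the argument used for Proposition~\ref{not-SSS}, but now checking the three defining conditions of weak-sense stationarity rather than the full distributional condition. The key observation is that WSS requires \emph{both} that $\mathbb{E}(X_t)$ be independent of $t$ \emph{and} that $C_X(t,s)$ depend only on $|t-s|$; it suffices to exhibit a failure of either one under the stated hypothesis that $\mu_t$ and $\sigma_t$ are not constant in time.

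First I would recall, from properties (P.7) and (P.8) in Section~\ref{sec:pre:prop} and from \eqref{cov}, the three quantities already computed for a BN random process:
\begin{align*}
\mathbb{E}(X_t)=\mu_t,
\qquad
\mathbb{E}(X_t^2)=\sigma_t^2(1+\alpha^2)+\mu_t^2,
\qquad
C_X(t,s)=\sigma_t\sigma_s\bigl[\rho(1+\alpha^2)+(1-\rho^2)\alpha\bigr].
\end{align*}
The second moment is finite for every $t$, so the square-integrability condition always holds and never obstructs WSS. I would then split into cases according to which parameter is non-constant. If $\mu_t$ depends on $t$, then $\mathbb{E}(X_t)=\mu_t$ is not independent of $t$, so the first WSS condition fails immediately. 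If instead $\sigma_t$ depends on $t$, I would look at the autocovariance: taking $s=t$ gives $C_X(t,t)=\sigma_t^2\bigl[\rho(1+\alpha^2)+(1-\rho^2)\alpha\bigr]$, which varies with $t$ whenever $\sigma_t$ does and the bracket is nonzero, so the covariance cannot depend on $t-s$ alone. In either case at least one WSS requirement is violated, so the process is not WSS; this establishes the claim.

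The one point that needs slight care — and which I expect to be the only real obstacle — is the degenerate possibility that the bracket $\rho(1+\alpha^2)+(1-\rho^2)\alpha$ vanishes, in which case $C_X(t,s)\equiv 0$ and the autocovariance argument alone says nothing. I would handle this by noting that even then, if $\sigma_t$ is non-constant, one can instead return to the mean or, more robustly, observe (as in the proof of Proposition~\ref{not-SSS}) that the entire one-dimensional PDF $f(x;\boldsymbol{\theta}_t)$ changes with $t$ because $\sigma_t$ enters the density \eqref{density-BN}; a WSS process built from a BN family with time-varying scale still has a time-varying variance $\mathrm{Var}(X_t)=\sigma_t^2(1+\alpha^2)$, contradicting $C_X(t,t)$ being constant — and $\mathrm{Var}(X_t)=C_X(t,t)$ always, independently of the value of $\rho$ and $\alpha$. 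So the cleanest route is: compute $C_X(t,t)=\mathrm{Var}(X_t)=\sigma_t^2(1+\alpha^2)$ directly (not through \eqref{cov}, whose bracket can degenerate, but through (P.8)), note this depends on $t$ whenever $\sigma_t$ does, and combine with the mean argument for the $\mu_t$ case. That sidesteps the degeneracy entirely and keeps the proof parallel in spirit to Proposition~\ref{not-SSS}.
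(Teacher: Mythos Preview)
Your proposal is correct and follows essentially the same approach as the paper: the paper simply records $\mathbb{E}(X_t)=\mu_t$, $\mathbb{E}(X_t^2)=\sigma_t^2(1+\alpha^2)+\mu_t^2$, and $C_X(t,s)=\sigma_t\sigma_s[\rho(1+\alpha^2)+(1-\rho^2)\alpha]$ and concludes immediately. Your treatment is in fact more careful than the paper's, since you notice and patch the degenerate case where the bracket vanishes by appealing directly to $\mathrm{Var}(X_t)=\sigma_t^2(1+\alpha^2)$ via (P.8); the paper does not address this.
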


\begin{remark}
	In the case that $\mu_t$ and $\sigma_t$ [or $\rho=\alpha=0$] are independent of time, it is clear that the BN process is SSS and WSS.
\end{remark}

\subsection{Mean, variance and covariance ergodicity of the BN random process}
In many real-life situations, it is not always possible to have many realizations of the random process available to estimate a population parameter (for example, the mean, variance and covariance function of process), as is customary in classical estimation, but rather a single one. In this case,  in order to study the process, we calculate the temporal characteristic in order to study the process characteristic.

\begin{definition}
Let $X_t$ be a random process. We define the temporal mean of $X_t$ as follows
\begin{align*}
\langle m_X\rangle_{_T}={1\over 2T}\, \int_{-T}^{T} X_t\, {\rm d} t, \quad T>0.
\end{align*}
\end{definition}

\begin{definition}
A process $X_t$  with mean $\mu$ independent of time is mean ergodic if
\begin{align*}
\lim_{T\to\infty} {\rm Var}(\langle m_X\rangle_{_T})
= 
\lim_{T\to\infty} 
\mathbb{E}(\langle m_X\rangle_{_T}-\mu)^2
=0.
\end{align*}
\end{definition}

\begin{proposition}
The BN random process  with mean $\mu$ independent of time is mean ergodic whenever 
\begin{align}\label{condition-ergodic}
\lim_{T\to\infty}   {1\over 2T}\, \int_{-T}^{T} \sigma_t\, {\rm d} t=0.
\end{align}

For example, we can take $\sigma_t=\exp({-t^2})$.
\end{proposition}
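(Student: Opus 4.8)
The plan is to reduce ${\rm Var}(\langle m_X\rangle_{_T})$ to a double integral of the covariance function and then exploit the explicit product form of $C_X(t,s)$ recorded just before the statement.

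First I would note that, since $\mathbb{E}(X_t)=\mu$ is independent of $t$, Fubini's theorem applied over the bounded interval $[-T,T]$ gives $\mathbb{E}(\langle m_X\rangle_{_T})=\frac{1}{2T}\int_{-T}^{T}\mathbb{E}(X_t)\,{\rm d}t=\mu$. Hence
\[
{\rm Var}(\langle m_X\rangle_{_T})
=\mathbb{E}(\langle m_X\rangle_{_T}-\mu)^{2}
=\frac{1}{4T^{2}}\,\mathbb{E}\biggl[\,\int_{-T}^{T}\!(X_t-\mu)\,{\rm d}t\int_{-T}^{T}\!(X_s-\mu)\,{\rm d}s\,\biggr].
\]
Writing the product of integrals as a double integral over $[-T,T]^{2}$ and moving the expectation inside (stochastic Fubini, legitimate because the domain is compact and $X_t$ has finite second moment for each $t$, so that $|C_X(t,s)|\le\sqrt{{\rm Var}(X_t){\rm Var}(X_s)}$ is bounded on $[-T,T]^{2}$ by property P.8 and local boundedness of $\sigma_t$), and using $\mathbb{E}(X_t-\mu)=0$, this becomes
\[
{\rm Var}(\langle m_X\rangle_{_T})
=\frac{1}{4T^{2}}\int_{-T}^{T}\!\int_{-T}^{T} C_X(t,s)\,{\rm d}t\,{\rm d}s .
\]

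Next I would substitute $C_X(t,s)=\sigma_t\sigma_s\,[\rho(1+\alpha^{2})+(1-\rho^{2})\alpha]$ from \eqref{cov}. Since the bracketed factor is a constant independent of $t$ and $s$, the integrand separates and the double integral factorizes as the square of a single integral; writing $K=\rho(1+\alpha^{2})+(1-\rho^{2})\alpha$,
\[
{\rm Var}(\langle m_X\rangle_{_T})
=\frac{K}{4T^{2}}\biggl(\int_{-T}^{T}\sigma_t\,{\rm d}t\biggr)^{\!2}
=K\biggl(\frac{1}{2T}\int_{-T}^{T}\sigma_t\,{\rm d}t\biggr)^{\!2}.
\]
Letting $T\to\infty$ and invoking the hypothesis \eqref{condition-ergodic}, the last expression converges to $K\cdot 0=0$, so $\lim_{T\to\infty}{\rm Var}(\langle m_X\rangle_{_T})=0$, which is precisely the definition of mean ergodicity. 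For the concrete choice $\sigma_t=\exp(-t^{2})$ one has $\int_{-T}^{T}\exp(-t^{2})\,{\rm d}t\to\sqrt{\pi}<\infty$, whence $\frac{1}{2T}\int_{-T}^{T}\sigma_t\,{\rm d}t\to 0$ and \eqref{condition-ergodic} is satisfied.

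The argument is short, so no serious difficulty arises; the one point I would state carefully is the interchange of expectation with the double integral, handled by a standard Fubini/Tonelli argument using that the integration region $[-T,T]^{2}$ is compact and the covariance kernel $C_X$ is bounded on it.
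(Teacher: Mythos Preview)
Your proof is correct and follows essentially the same approach as the paper: express ${\rm Var}(\langle m_X\rangle_{_T})$ as the double integral of $C_X(t,s)$, substitute the product form $C_X(t,s)=\sigma_t\sigma_s[\rho(1+\alpha^2)+(1-\rho^2)\alpha]$ to factor it as a constant times the square of the temporal average of $\sigma_t$, and then apply the hypothesis. You add some details the paper omits (the Fubini justification and the explicit verification of the example), but these are refinements rather than a different argument.
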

\begin{proof}
A simple calculus shows that
\begin{align*}
{\rm Var}(\langle m_X\rangle_{_T})
=
{1\over 4T^2}\,
\int_{-T}^{T} \int_{-T}^{T} C_X(t,t')\, {\rm d} t' {\rm d} t.
\end{align*}
Since $C_X(t,s)\stackrel{\eqref{cov}}{=}\sigma_t\sigma_{t'}[
\rho(1+\alpha^2)+(1-\rho^2)\alpha]$, it follows that
\begin{align*}
{\rm Var}(\langle m_X\rangle_{_T})
=
[
\rho(1+\alpha^2)+(1-\rho^2)\alpha]
\biggl(
{1\over 2T}\,
\int_{-T}^{T} \sigma_t\, {\rm d} t
\biggr)^2.
\end{align*}
Letting $T\to\infty$ in the above equality, from condition \eqref{condition-ergodic} the proof follows.
\end{proof}

\begin{definition}
	A WSS process $X_t$ is covariance-ergodic if
	\begin{align*}
	\lim_{T\to\infty} 
	{\rm Var}\biggl[
	{1\over 2T}\, \int_{-T}^{T} (X_t-\mu)(X_{t+s}-\mu)\, {\rm d} t
	\biggr]
	=0.
	\end{align*}
	
When $s=0$ the WSS process is called variance ergodic.
\end{definition}

In general, the BN random process $X_t$ is not a WSS process (see Proposition \ref{not-WSS}). Then it is clear that $X_t$ is not a covariance ergodic process. 

\begin{proposition}
When $\mu_t$ is independent of time and $\rho=\alpha=0$ the BN process is variance ergodic whenever
\begin{align}\label{cond-auto}
\lim_{T\to\infty} 
{1\over 4T^2}\,
\int_{-T}^{T} \int_{-T}^{T} {\rm Cov}(X_t^2,X_{t'}^2)\, {\rm d} t' {\rm d} t
=
\lim_{T\to\infty} 
{1\over 4T^2}\,
\int_{-T}^{T} \int_{-T}^{T} {\rm Cov}(X_t^2,X_{t'})\, {\rm d} t' {\rm d} t
=0.
\end{align}
\end{proposition}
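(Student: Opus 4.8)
The plan is to unwind the definition of variance ergodicity --- it is the case $s=0$ of covariance ergodicity --- into a statement about a double integral of covariances, then expand the integrand and identify the surviving pieces as exactly the quantities controlled by \eqref{cond-auto}, together with one piece that vanishes identically because $\rho=\alpha=0$ trivialises the covariance structure.

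First I would observe that, with $s=0$, the assertion to be proved is
\begin{align*}
\lim_{T\to\infty}{\rm Var}\biggl[{1\over 2T}\int_{-T}^{T}(X_t-\mu)^2\,{\rm d}t\biggr]=0 .
\end{align*}
Repeating the computation of ${\rm Var}(\langle m_X\rangle_{_T})$ made earlier in this section --- Fubini's theorem together with the bilinearity of covariance, which is legitimate because with $\rho=\alpha=0$ one has $X_t\sim N(\mu,\sigma_t^2)$, so all moments are finite, under the mild regularity (e.g. $t\mapsto\sigma_t$ locally bounded, the process jointly measurable) implicit throughout --- yields
\begin{align*}
{\rm Var}\biggl[{1\over 2T}\int_{-T}^{T}(X_t-\mu)^2\,{\rm d}t\biggr]
=
{1\over 4T^2}\int_{-T}^{T}\int_{-T}^{T}{\rm Cov}\bigl((X_t-\mu)^2,(X_{t'}-\mu)^2\bigr)\,{\rm d}t'\,{\rm d}t .
\end{align*}

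Next I would expand $(X_t-\mu)^2=X_t^2-2\mu X_t+\mu^2$; since additive constants do not affect covariance and covariance is bilinear,
\begin{align*}
{\rm Cov}\bigl((X_t-\mu)^2,(X_{t'}-\mu)^2\bigr)
=
{\rm Cov}(X_t^2,X_{t'}^2)-2\mu\,{\rm Cov}(X_t^2,X_{t'})-2\mu\,{\rm Cov}(X_t,X_{t'}^2)+4\mu^2\,{\rm Cov}(X_t,X_{t'}) .
\end{align*}
Interchanging the names $t$ and $t'$ inside the symmetric double integral shows the two middle terms contribute equally, so
\begin{align*}
{\rm Var}\biggl[{1\over 2T}\int_{-T}^{T}(X_t-\mu)^2\,{\rm d}t\biggr]
&=
{1\over 4T^2}\int_{-T}^{T}\int_{-T}^{T}{\rm Cov}(X_t^2,X_{t'}^2)\,{\rm d}t'\,{\rm d}t
\\
&\quad
-\,4\mu\cdot{1\over 4T^2}\int_{-T}^{T}\int_{-T}^{T}{\rm Cov}(X_t^2,X_{t'})\,{\rm d}t'\,{\rm d}t
\\
&\quad
+\,{\mu^2\over T^2}\int_{-T}^{T}\int_{-T}^{T}C_X(t,t')\,{\rm d}t'\,{\rm d}t .
\end{align*}

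Finally I would let $T\to\infty$: the first two double integrals tend to $0$ directly by the two equalities in \eqref{cond-auto}, and for the third, $\rho=\alpha=0$ makes $X_t$ and $X_{t'}$ independent for $t\neq t'$ (last bullet of Section \ref{sec:bivariate}), so $C_X(t,t')=0$ off the diagonal $\{t=t'\}$, which is Lebesgue-null in $[-T,T]^2$; hence that integral is $0$ for every $T$. Thus the whole expression vanishes and the process is variance ergodic. The only genuinely delicate step is the Fubini/measurability justification used to pass from the variance of the time average to the double covariance integral and to symmetrise the cross terms; everything else is bookkeeping. One may further note that, under the same BBN joint law, independence of $X_t$ and $X_{t'}$ also forces ${\rm Cov}(X_t^2,X_{t'}^2)$ and ${\rm Cov}(X_t^2,X_{t'})$ to vanish off the diagonal, so \eqref{cond-auto} in fact holds automatically here.
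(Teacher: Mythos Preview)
Your proof is correct and follows essentially the same route as the paper: write the variance of the time average as a double integral of ${\rm Cov}\bigl((X_t-\mu)^2,(X_{t'}-\mu)^2\bigr)$, expand bilinearly, kill the $C_X(t,t')$ term via $\rho=\alpha=0$, and apply \eqref{cond-auto}. The only cosmetic differences are that you symmetrise the two cross terms into a single $-4\mu$ contribution (the paper leaves them separate) and you append the observation that \eqref{cond-auto} holds automatically under the BBN joint law, which is a nice addendum but not part of the paper's argument.
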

\begin{proof}
When $\rho=\alpha=0$, $C_X(t,t')=0$. A simple calculus shows that
\begin{align*}
	{\rm Var}\biggl[
{1\over 2T}\, \int_{-T}^{T} (X_t-\mu)^2\, {\rm d} t
\biggr]
=
{1\over 4T^2}\,
\int_{-T}^{T} \int_{-T}^{T} {\rm Cov}[(X_t-\mu)^2,(X_{t'}-u)^2]\, {\rm d} t' {\rm d} t.
\end{align*}
Since $C_X(t,t')=0$ the above expression is
\begin{align*}
=
{1\over 4T^2}\,
\int_{-T}^{T} \int_{-T}^{T} 
\big[{\rm Cov}(X_t^2,X_{t'}^2)-2\mu{\rm Cov}(X_t^2,X_{t'})-2\mu{\rm Cov}(X_t,X_{t'}^2)\big] \, {\rm d} t' {\rm d} t.
\end{align*}
By using condition \eqref{cond-auto} the proof follows.
\end{proof}

\section{A triangular array central limit theorem}\label{sec:triangular}

\begin{definition}
Two random variables $X$ and $Y$ are said to be positively quadrant
dependent (PQD) if, for all $x,y\in\mathbb{R}$,
\begin{align*}
H(x,y)=\mathbb{P}(X>x,Y>y)-\mathbb{P}(X>x)\mathbb{P}(Y>y)\geq 0.
\end{align*}
\end{definition}
It is usual to rewrite $H(x,y)$ using distribution
functions as follows:
\begin{align}\label{ident-H}
H(x,y)=\mathbb{P}(X\leq x,Y\leq y)-\mathbb{P}(X\leq x)\mathbb{P}(Y\leq y).
\end{align}

\begin{remark}\label{rem-H}
If $F$ is a CDF, for all $x,y\in \mathbb{R}^2$ and $\alpha\in\mathbb{R}$, the following holds
\begin{align*}
F(\min\{x,y\}-\alpha)
+
F(\min\{x,y\}+\alpha)
\geq
{1\over 2}\, \big[
F(x-\alpha)
+
F(x+\alpha)
\big]
\big[
F(y-\alpha)
+
F(y+\alpha)
\big].
\end{align*}

Indeed, without loss of generality, assume that $x<y$. Then
\begin{align*}
F(\min\{x,y\}-\alpha)
+
F(\min\{x,y\}+\alpha)
&=
F(x-\alpha)
+
F(x+\alpha)
\\[0,2cm]
&\geq
{1\over 2}\, \big[
F(x-\alpha)
+
F(x+\alpha)
\big]
\big[
F(y-\alpha)
+
F(y+\alpha)
\big],
\end{align*}
because 
$0\leq F(y-\alpha)
+
F(y+\alpha)\leq 2$.
\end{remark}

By stochastic representation of Proposition \ref{Stochastic representation}, if $X_j\sim \text{BN}(\boldsymbol{\theta}_j)$, there are $Z_j\sim N(0,1)$ and $A_j\sim {\rm Bernoulli}(1/2)$, with $A_j\in\{\pm\alpha\}$, so that $X_j=\sigma_j(Z_j+A_j)+\mu_j$. From now on, in this section, we assume that variables $Z_j$ and $A_j$ are independent of $j$. I.e., 
\begin{align}\label{rep-stoch}
X_j=\sigma_j(Z+A)+\mu_j.
\end{align}

\begin{proposition}\label{Prop-pqd}
The random variables $X\sim \text{BN}(\boldsymbol{\theta}_X)$ and $Y\sim \text{BN}(\boldsymbol{\theta}_Y)$ are PQD, where
 $\boldsymbol{\theta}_X=(\mu_X,\sigma_X,\alpha)$, $\mu_X\in\mathbb{R}$, $\sigma_X>0$ and $\alpha\in \mathbb{R}$.
\end{proposition}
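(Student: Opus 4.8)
The plan is to use the stochastic representation \eqref{rep-stoch}, which writes $X = \sigma_X(Z+A)+\mu_X$ and $Y = \sigma_Y(Z+A)+\mu_Y$ with the \emph{same} standard normal $Z$ and the \emph{same} Bernoulli variable $A\in\{\pm\alpha\}$, independent of one another. The key point is that both $X$ and $Y$ are strictly increasing affine functions of the single pair $(Z,A)$; in particular they are comonotone in $Z$ for each fixed value of $A$. So the natural strategy is to condition on $A$, express the joint and marginal CDFs of $(X,Y)$ through $\Phi$, and then verify the inequality $H(x,y)\geq 0$ from \eqref{ident-H} by a pointwise computation. Remark \ref{rem-H} is clearly tailored to be the engine of this computation, so I would aim to reduce $H(x,y)\geq 0$ to exactly the inequality stated there (with $F=\Phi$ and a suitably rescaled $\alpha$).

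First I would compute, using \eqref{rep-stoch} and conditioning on $A$, that for real $x$
\begin{align*}
\mathbb{P}(X\leq x)
=\tfrac12\Phi\!\Big(\tfrac{x-\mu_X}{\sigma_X}-\alpha\Big)
+\tfrac12\Phi\!\Big(\tfrac{x-\mu_X}{\sigma_X}+\alpha\Big),
\end{align*}
consistent with \eqref{CDF-Bigaussian}, and similarly for $Y$. For the joint CDF, since $\{X\leq x\}=\{Z\leq \tfrac{x-\mu_X}{\sigma_X}-A\}$ and $\{Y\leq y\}=\{Z\leq \tfrac{y-\mu_Y}{\sigma_Y}-A\}$, conditioning on $A=\pm\alpha$ gives
\begin{align*}
\mathbb{P}(X\leq x,Y\leq y)
=\tfrac12\,\Phi\!\Big(\min\Big\{\tfrac{x-\mu_X}{\sigma_X}-\alpha,\ \tfrac{y-\mu_Y}{\sigma_Y}-\alpha\Big\}\Big)
+\tfrac12\,\Phi\!\Big(\min\Big\{\tfrac{x-\mu_X}{\sigma_X}+\alpha,\ \tfrac{y-\mu_Y}{\sigma_Y}+\alpha\Big\}\Big).
\end{align*}
Writing $u=\tfrac{x-\mu_X}{\sigma_X}$ and $v=\tfrac{y-\mu_Y}{\sigma_Y}$ and using $\min\{u-\alpha,v-\alpha\}=\min\{u,v\}-\alpha$ and $\min\{u+\alpha,v+\alpha\}=\min\{u,v\}+\alpha$, the joint CDF becomes $\tfrac12[\Phi(\min\{u,v\}-\alpha)+\Phi(\min\{u,v\}+\alpha)]$. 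Then $H(x,y)\geq 0$ is precisely
\begin{align*}
\Phi(\min\{u,v\}-\alpha)+\Phi(\min\{u,v\}+\alpha)
\geq
\tfrac12\big[\Phi(u-\alpha)+\Phi(u+\alpha)\big]\big[\Phi(v-\alpha)+\Phi(v+\alpha)\big],
\end{align*}
which is exactly Remark \ref{rem-H} applied with $F=\Phi$. This finishes the proof.

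The main obstacle is essentially bookkeeping rather than a genuine difficulty: one must be careful that the two representations of $X$ and $Y$ share the \emph{same} $Z$ and $A$ (this is the standing assumption \eqref{rep-stoch} in this section), since otherwise the comonotonicity — and hence the PQD property — would fail. A secondary point to check is the degenerate cases ($\sigma_X$ or $\sigma_Y$ interactions, or $\alpha=0$, in which case $X$ and $Y$ are simply comonotone Gaussians and $H\geq 0$ is classical), but these are covered by the same computation. I would also note explicitly that the reduction uses only that the affine maps $z\mapsto\sigma_X z+\mu_X$ and $z\mapsto\sigma_Y z+\mu_Y$ are increasing, so the same argument shows any two BN variables built from a common $(Z,A)$ are PQD regardless of their location/scale parameters.
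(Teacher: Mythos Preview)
Your proposal is correct and follows essentially the same route as the paper's own proof: both use the shared stochastic representation \eqref{rep-stoch} to write the joint CDF as $\tfrac12[\Phi(\min\{u,v\}-\alpha)+\Phi(\min\{u,v\}+\alpha)]$, write the marginals via \eqref{CDF-Bigaussian}, and then invoke Remark \ref{rem-H} with $F=\Phi$ to conclude $H(x,y)\geq 0$. The only cosmetic difference is that the paper phrases the conditioning on $A$ as an expectation $\widehat{\mathbb{E}}$ via Fubini, while you condition on $A=\pm\alpha$ directly and make the simplification $\min\{u\pm\alpha,v\pm\alpha\}=\min\{u,v\}\pm\alpha$ explicit.
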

\begin{proof}
By \eqref{rep-stoch}, $X=\sigma_X(Z+A)+\mu_X$ and $Y=\sigma_Y(Z+A)+\mu_Y$.
Then
\begin{align*}
\mathbb{P}(X\leq x,Y\leq y)
&=
\mathbb{P}\biggl(Z\leq {x-\mu_X\over\sigma_X}-A ,Z\leq {y-\mu_Y\over\sigma_Y}-A\biggr)
\\[0,2cm]
&=
\mathbb{P}\biggl(Z\leq \min\Big\{{x-\mu_X\over\sigma_X},{y-\mu_Y\over\sigma_Y}\Big\}-A\biggr)
=
\mathbb{P}(Z\leq \varphi_{n;t_0}(A)).
\end{align*}
Let $\widehat{\mathbb{E}}$ be the expectation over $A$. By Fubini’s theorem we have
\begin{align*}
\mathbb{P}(Z\leq \varphi_{n;t_0}(A))
&=
\widehat{\mathbb{E}}\biggl[\Phi\biggl(\min\Big\{{x-\mu_X\over\sigma_X},{y-\mu_Y\over\sigma_Y}\Big\}-A\biggr)\biggr]
\\[0,2cm]
&=
{1\over 2}\, \biggl[
\Phi\biggl(\min\Big\{{x-\mu_X\over\sigma_X},{y-\mu_Y\over\sigma_Y}\Big\}-\alpha\biggr)
+
\Phi\biggl(\min\Big\{{x-\mu_X\over\sigma_X},{y-\mu_Y\over\sigma_Y}\Big\}+\alpha\biggr)
\biggr].
\end{align*}
Therefore,
\begin{align*}
\mathbb{P}(X\leq x,Y\leq y)
=
{1\over 2}\, \biggl[
\Phi\biggl(\min\Big\{{x-\mu_X\over\sigma_X},{y-\mu_Y\over\sigma_Y}\Big\}-\alpha\biggr)
+
\Phi\biggl(\min\Big\{{x-\mu_X\over\sigma_X},{y-\mu_Y\over\sigma_Y}\Big\}+\alpha\biggr)
\biggr].
\end{align*}

On the other hand, by using the identity 
${\rm erf}({x/\sqrt{2}})=2\Phi(x)-1$, the CDF \eqref{CDF-Bigaussian} of $X\sim \text{BN}(\boldsymbol{\theta}_X)$ is written as
\begin{align*}
\mathbb{P}(X\leq x)
=
{1\over 2}\, \biggl[
\Phi\biggl({x-\mu_X\over\sigma_X}-\alpha\biggr)
+
\Phi\biggl({x-\mu_X\over\sigma_X}+\alpha\biggr)
\biggr].
\end{align*}
Hence, by Remark \ref{rem-H} we get
\begin{align*}
H(x,y)
&\stackrel{\eqref{ident-H}}{=}
\mathbb{P}(X\leq x,Y\leq y)-\mathbb{P}(X\leq x)\mathbb{P}(Y\leq y)
\\[0,3cm]
&=
{1\over 2}\, \biggl[
\Phi\biggl(\min\Big\{{x-\mu_X\over\sigma_X},{y-\mu_Y\over\sigma_Y}\Big\}-\alpha\biggr)
+
\Phi\biggl(\min\Big\{{x-\mu_X\over\sigma_X},{y-\mu_Y\over\sigma_Y}\Big\}+\alpha\biggr)
\biggr]
\\[0,3cm]
&\quad -
{1\over 4}\, \biggl[
\Phi\biggl({x-\mu_X\over\sigma_X}-\alpha\biggr)
+
\Phi\biggl({x-\mu_X\over\sigma_X}+\alpha\biggr)
\biggr]
\biggl[
\Phi\biggl({y-\mu_Y\over\sigma_Y}-\alpha\biggr)
+
\Phi\biggl({y-\mu_Y\over\sigma_Y}+\alpha\biggr)
\biggr]
\\[0,3cm]
&
\geq 0.
\end{align*}
This completes the proof.
\end{proof}

\begin{definition}
We define a sequence of random variables $\{X_j\}$ to be
linearly positive quadrant dependent (LPQD) if for any disjoint $A,B$ and positive $\{\lambda_j\}$, $\sum_{k\in A} \lambda_k X_k$
and $\sum_{l\in B} \lambda_l X_l$ are PQD.
\end{definition}

A reasoning similar to the proof of 
Proposition \ref{Prop-pqd} gives the following result.
\begin{proposition}\label{Prop-lpqd}
	The sequence of random variables $\{X_j\}$, with $X_j\sim \text{BN}(\boldsymbol{\theta}_j)$, is LPQD, where
	$\boldsymbol{\theta}_j=(\mu_j,\sigma_j,\alpha)$, $\mu_j\in\mathbb{R}$, $\sigma_j>0$ and $\alpha\in \mathbb{R}$.
\end{proposition}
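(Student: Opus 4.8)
The plan is to mimic the argument used for Proposition \ref{Prop-pqd}, but applied to the linear combinations $S_A=\sum_{k\in A}\lambda_k X_k$ and $S_B=\sum_{l\in B}\lambda_l X_l$ over disjoint index sets $A,B$ with positive weights $\{\lambda_j\}$. First I would invoke the stochastic representation \eqref{rep-stoch}, under which $X_j=\sigma_j(Z+A_*)+\mu_j$ with a single common pair $(Z,A_*)$, $Z\sim N(0,1)$, $A_*\sim{\rm Bernoulli}(1/2)$ on $\{\pm\alpha\}$, independent of each other. Substituting, $S_A=\big(\sum_{k\in A}\lambda_k\sigma_k\big)(Z+A_*)+\sum_{k\in A}\lambda_k\mu_k=:a(Z+A_*)+b$ with $a>0$, and likewise $S_B=c(Z+A_*)+d$ with $c>0$. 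Thus both $S_A$ and $S_B$ are strictly increasing affine functions of the \emph{same} random quantity $Z+A_*$, exactly as $X$ and $Y$ were in Proposition \ref{Prop-pqd}.

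Next I would compute the joint CDF by conditioning on $A_*$ and using Fubini, precisely as in the proof of Proposition \ref{Prop-pqd}: writing the events $\{S_A\le s\}$ and $\{S_B\le t\}$ as $\{Z\le (s-b)/a-A_*\}$ and $\{Z\le (t-d)/c-A_*\}$, their intersection becomes $\{Z\le \min\{(s-b)/a,(t-d)/c\}-A_*\}$, so
\begin{align*}
\mathbb{P}(S_A\le s,S_B\le t)
=
{1\over 2}\Bigl[
\Phi\bigl(\mathfrak{m}-\alpha\bigr)+\Phi\bigl(\mathfrak{m}+\alpha\bigr)
\Bigr],
\qquad
\mathfrak{m}=\min\Bigl\{{s-b\over a},{t-d\over c}\Bigr\},
\end{align*}
while the marginals are $\mathbb{P}(S_A\le s)=\tfrac12[\Phi((s-b)/a-\alpha)+\Phi((s-b)/a+\alpha)]$ and similarly for $S_B$. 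Applying Remark \ref{rem-H} with $F=\Phi$, $x=(s-b)/a$, $y=(t-d)/c$ then yields $H(s,t)=\mathbb{P}(S_A\le s,S_B\le t)-\mathbb{P}(S_A\le s)\mathbb{P}(S_B\le t)\ge 0$, which is exactly the PQD property of the pair $(S_A,S_B)$. Since $A,B$ and $\{\lambda_j\}$ were arbitrary (disjoint, positive), this establishes that $\{X_j\}$ is LPQD.

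The only genuinely new point beyond Proposition \ref{Prop-pqd} is the observation that a positive linear combination of the $X_j$ collapses to a single affine function $a(Z+A_*)+b$ \emph{because the representation \eqref{rep-stoch} uses a common $(Z,A_*)$ for all $j$}; this is what makes $S_A$ and $S_B$ comonotone with the shared driver $Z+A_*$, so the rest of the argument goes through verbatim. I do not expect a real obstacle here — the potential subtlety is simply making sure the coefficients $a=\sum_{k\in A}\lambda_k\sigma_k$ and $c=\sum_{l\in B}\lambda_l\sigma_l$ are strictly positive so that the inequalities defining the events do not flip direction, which holds since all $\sigma_k>0$ and all $\lambda_k>0$; if some $\lambda_k$ were allowed to be zero one simply drops those terms, and the degenerate case $a=0$ (empty $A$) makes $S_A$ constant and the PQD inequality trivial.
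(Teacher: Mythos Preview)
Your proposal is correct and is precisely the ``reasoning similar to the proof of Proposition \ref{Prop-pqd}'' that the paper itself invokes without spelling out. The key observation you make explicit---that under the common-driver representation \eqref{rep-stoch} any positive linear combination $\sum_{k\in A}\lambda_k X_k$ collapses to an increasing affine function of $Z+A_*$, so Remark \ref{rem-H} applies verbatim---is exactly what the paper is gesturing at.
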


\begin{theorem}[Triangular array CLT]
Let $S_n=\sum_{j=1}^{M_n}[X_{n,j}-\mathbb{E}(X_{n,j})]$ 
where for each $n$, $X_{n,j}\sim\text{BN}(\boldsymbol{\theta}_{n,j})$, with
$\boldsymbol{\theta}_{n,j}=(\mu_{n,j},\sigma_{n,j},\alpha)$, $\mu_{n,j}\in\mathbb{R}$, $\sigma_{n,j}>0$ and $\alpha\in \mathbb{R}$.
Suppose there exist $c_1,c_2,c_3\in(0,\infty)$ and a sequence $u_l\to 0$ so that for all $n,j,l$, the following hold:
\begin{align}
& \sigma_{n,j}^2\geq c_1, \ \sigma_{n,j}^3\leq c_2; \label{Assumptions}
\\[0,1cm]
& \sum_{k=1}^{M_n} \sigma_{n,j}\sigma_{n,k}\leq c_3; \label{Assumptions-1}
\\[0,1cm]
&
\sum_{\substack{k=1 \\ \vert k-j\vert\geq l}}^{M_n} \sigma_{n,j}\sigma_{n,k} \leq u_l; \label{Assumptions-2}
\end{align}	
	then
	\begin{align*}
\lim_{n\to\infty}
\mathbb{P}\big([{\rm Var}(S_n)]^{-1/2} S_n \leq x\big)
=
{1\over \sqrt{2\pi}}\, \int_{-\infty}^x \exp(-x^2/2) \, {\rm d} x, 
\quad \forall x\in\mathbb{R}.
	\end{align*}
\end{theorem}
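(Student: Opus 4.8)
The plan is to recast the assertion as a central limit theorem for a triangular array of LPQD summands and to verify its hypotheses from \eqref{Assumptions}--\eqref{Assumptions-2}. Put $\widetilde{X}_{n,j}=X_{n,j}-\mathbb{E}(X_{n,j})$ and $v_n^2={\rm Var}(S_n)=\sum_{j=1}^{M_n}{\rm Var}(X_{n,j})+\sum_{j\neq k}{\rm Cov}(X_{n,j},X_{n,k})$. By Proposition \ref{Prop-lpqd} the row $\{X_{n,1},\dots,X_{n,M_n}\}$ is LPQD, hence each pair $(X_{n,j},X_{n,k})$ is PQD and ${\rm Cov}(X_{n,j},X_{n,k})\geq 0$; moreover, by the stochastic representation of Proposition \ref{Stochastic representation}, $\widetilde{X}_{n,j}$ is distributed as $\sigma_{n,j}(Z+A)$ with $Z\sim N(0,1)$ and $A\in\{\pm\alpha\}$ independent, so that ${\rm Var}(X_{n,j})=\sigma_{n,j}^2(1+\alpha^2)$ (property P.8) and $\mathbb{E}\vert\widetilde{X}_{n,j}\vert^3=\sigma_{n,j}^3\,m_3$ with $m_3:=\mathbb{E}\vert Z+A\vert^3<\infty$. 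Since all covariances are nonnegative, \eqref{Assumptions} gives the lower bound $v_n^2\geq(1+\alpha^2)\sum_j\sigma_{n,j}^2\geq(1+\alpha^2)c_1 M_n$, while the elementary estimate $\vert{\rm Cov}(X_{n,j},X_{n,k})\vert\leq C(\alpha)\,\sigma_{n,j}\sigma_{n,k}$ for a finite constant $C(\alpha)$ (cf.\ \eqref{cov}), together with \eqref{Assumptions-1}, gives $v_n^2\leq C(\alpha)\sum_j\sum_k\sigma_{n,j}\sigma_{n,k}\leq C(\alpha)c_3 M_n$; hence $v_n^2\asymp M_n$ and, since $M_n\to\infty$, $v_n^2\to\infty$.

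I would then invoke a triangular-array central limit theorem for LPQD (positively dependent) arrays --- in the spirit of the limit theorems of Newman and of Cox and Grimmett, see also Birkel --- for which it suffices to verify, for the centered array $\widetilde{X}_{n,j}$: \textbf{(i)} uniform asymptotic negligibility, $\max_{1\leq j\leq M_n}{\rm Var}(\widetilde{X}_{n,j})/v_n^2\to 0$, which holds because $\sigma_{n,j}^3\leq c_2$ forces $\sigma_{n,j}^2\leq c_2^{2/3}$, so the ratio is at most $c_2^{2/3}/(c_1 M_n)\to 0$; \textbf{(ii)} a Lyapunov condition, $v_n^{-3}\sum_{j=1}^{M_n}\mathbb{E}\vert\widetilde{X}_{n,j}\vert^3\to 0$, which holds since the left-hand side is at most $m_3\,c_2\,M_n/\bigl((1+\alpha^2)c_1 M_n\bigr)^{3/2}=O(M_n^{-1/2})$; and \textbf{(iii)} decay of the covariance tail, $\lim_{l\to\infty}\sup_n v_n^{-2}\sum_{j=1}^{M_n}\sum_{k:\,\vert k-j\vert\geq l}{\rm Cov}(X_{n,j},X_{n,k})=0$, which follows from \eqref{Assumptions-2} because that inner double sum is at most $C(\alpha)\sum_j\sum_{\vert k-j\vert\geq l}\sigma_{n,j}\sigma_{n,k}\leq C(\alpha)M_n u_l$, and dividing by $v_n^2\geq(1+\alpha^2)c_1 M_n$ leaves the $n$-uniform bound $C(\alpha)u_l/\bigl((1+\alpha^2)c_1\bigr)\to 0$. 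With (i)--(iii) in hand, the cited theorem yields that $v_n^{-1}S_n=[{\rm Var}(S_n)]^{-1/2}S_n$ converges in distribution to $N(0,1)$, which is exactly the stated conclusion.

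If instead a self-contained proof is preferred, the engine behind such theorems is the Bernstein big-block/small-block construction combined with Newman's characteristic-function inequality. One partitions $\{1,\dots,M_n\}$ into alternating large blocks of length $p_n$ and small blocks of length $q_n$ with $q_n/p_n\to 0$, $p_n\to\infty$ and the number of blocks tending to infinity; by \eqref{Assumptions-1} and \eqref{Assumptions-2} the pooled small blocks are negligible in $v_n^{-2}$-variance. For the large-block sums $U_{n,1},U_{n,2},\dots$, the LPQD property and Newman's inequality give $\big\vert\mathbb{E}\exp\!\bigl(\mathrm{i}t\,v_n^{-1}\sum_m U_{n,m}\bigr)-\prod_m\mathbb{E}\exp\!\bigl(\mathrm{i}t\,v_n^{-1}U_{n,m}\bigr)\big\vert\leq t^2 v_n^{-2}\sum_{m\neq m'}{\rm Cov}(U_{n,m},U_{n,m'})$, whose right-hand side is dominated by the inter-block covariances, of order $u_{q_n}\to 0$; one then replaces $\sum_m U_{n,m}$ by a sum of independent copies of the $U_{n,m}$ and applies the Lindeberg--Feller theorem, whose Lindeberg condition is supplied by the Lyapunov bound (ii). The main obstacle is exactly this decorrelation step: the block lengths $p_n,q_n$ have to be chosen so that \emph{simultaneously} the small blocks vanish in variance, the number of large blocks still diverges, and the accumulated Newman-inequality error $v_n^{-2}\sum_{m\neq m'}{\rm Cov}(U_{n,m},U_{n,m'})$ tends to $0$ --- and conditions \eqref{Assumptions}--\eqref{Assumptions-2} are precisely what make such a choice feasible; the remaining moment identities and the passage to independent blocks are routine.
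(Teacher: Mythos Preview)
Your approach is essentially the same as the paper's: both establish the LPQD property via Proposition~\ref{Prop-lpqd} and then verify the hypotheses of the Cox--Grimmett triangular-array CLT for associated variables, translating the moment and covariance bounds \eqref{Assumptions}--\eqref{Assumptions-2} into the required conditions on ${\rm Var}(X_{n,j})$, $\mathbb{E}\vert X_{n,j}-\mathbb{E}(X_{n,j})\vert^3$, and the covariance sums. The paper simply quotes the Cox--Grimmett conditions verbatim and checks them directly, whereas you recast them as uniform asymptotic negligibility, a Lyapunov bound, and normalized covariance-tail decay and then append a sketch of the underlying Bernstein-blocking/Newman-inequality argument---but the strategy is identical.
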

\begin{proof}
	Since for each $n$, $\{X_{n,j}\}$ is LPQD (see Proposition \ref{Prop-lpqd}) but not SSS (see Proposition \ref{not-SSS}),
by \cite{cox:02} it is enough to verify that:
\begin{align}
& {\rm Var}(X_{n,j})\geq \widetilde{c}_1, \ \mathbb{E}\vert X_{n,j}-\mathbb{E}(X_{n,j})\vert^3\leq \widetilde{c}_2; \label{Assumptions-to-verify}
\\[0,1cm]
& \sum_{k=1}^{M_n} {\rm Cov}(X_{n,j},X_{n,k})\leq \widetilde{c}_3; \label{Assumptions-to-verify-1}
\\[0,1cm]
&
\sum_{\substack{k=1 \\ \vert k-j\vert\geq l}}^{M_n} {\rm Cov}(X_{n,j},X_{n,k})\leq \widetilde{u}_l; \label{Assumptions-to-verify-2}
\end{align}
where  $\widetilde{u}_l \to 0$.

Indeed, since, by \eqref{Assumptions},  $\sigma^2_{n,j}\geq c_1$ and ${\rm Var}(X_{n,j})=\sigma^2_{n,j}(1+\alpha^2)$ (see property P.8 in Section \ref{sec:pre:prop}) we have ${\rm Var}(X_{n,j}) \geq\sigma^2_{n,j}\geq c_1=\widetilde{c}_1$. Moreover, using the representation in \eqref{rep-stoch} and the condition \eqref{Assumptions} we obtain 
\begin{align*}
\mathbb{E}\vert X_{n,j}-\mathbb{E}(X_{n,j})\vert^3
=
\sigma^3_{n,j}
  \mathbb{E}\vert Z+A\vert^3
  \leq 
  6\sqrt{2/\pi}\, \sigma^3_{n,j}
    \leq 
 5c_2=\widetilde{c}_2.
\end{align*}
That is, \eqref{Assumptions-to-verify} is satisfied.

On the other hand, since ${\rm Cov}(X_{n,j},X_{n,k})
\stackrel{\eqref{cov}}{=}
\sigma_{n,j}\sigma_{n,k}[
\rho(1+\alpha^2)+(1-\rho^2)\alpha]$, by conditions  \eqref{Assumptions-1} and \eqref{Assumptions-2}, the statements in 
\eqref{Assumptions-to-verify-1} and \eqref{Assumptions-to-verify-2} follow by taking $\widetilde{c}_3=c_3 [
\rho(1+\alpha^2)+(1-\rho^2)\alpha]$ and $\widetilde{u}_l= [
\rho(1+\alpha^2)+(1-\rho^2)\alpha] u_l$, respectively. 
\end{proof}

\begin{remark}
The set of $\sigma_{n,k}$'s satisfying conditions \eqref{Assumptions}, \eqref{Assumptions-1} and \eqref{Assumptions-2} is non-empty. 

Indeed, let us take $M_n=n$  and  $\sigma_{n,k}=r^{-k}$,
$k\geq 1$, $r>1$, for all $n$. 
Immediately, we have $\sigma_{n,k}>0$ and $\sigma_{n,k}\leq 1$.
That is, \eqref{Assumptions} is valid.
Moreover,
\begin{align*}
\sum_{k=1}^{n} \sigma_{n,j}\sigma_{n,k}
\leq 
\sum_{k=1}^{n} \sigma_{n,k}
\leq 
\sum_{k=1}^{\infty} {1\over r^k}
=
{1\over r-1}, 
\quad \text{for} \ r>1.
\end{align*}
%
Then \eqref{Assumptions-1} is satisfied.

Finally, since  $r^{\vert k-j\vert}\leq r^{j+k}$ for $r>1$, we have
 %
$
\sigma_{n,j}\sigma_{n,k}
=
r^{-(j+k)}
\leq 
r^{-\vert k-j\vert}.
$
Then,
\begin{align*}
\sum_{\substack{k=1 \\ \vert k-j\vert\geq l}}^{n} 
\sigma_{n,j} \sigma_{n,k}
\leq 
\sum_{\substack{k=1 \\ \vert k-j\vert\geq l}}^{n} 
{1\over r^{\vert k-j\vert}}
\leq
\sum_{\substack{k=1 \\ \vert k-j\vert\geq l}}^{\infty} 
{1\over r^{\vert k-j\vert}}
=
\sum_{m=l}^{\infty}
{1\over r^{m}}
\left[ \,
\sum_{\substack{k=1 \\ \vert k-j\vert= m}}^{\infty} 
1
\right],
\end{align*}
where in the last equality we rearrange the summation terms. Since
$\big[ \,
\sum_{\substack{k: \vert k-j\vert= m}}1 \big]$
is the number of vertices at the boundary of the one-dimensional
 ball of
radius $m$ centered at $j$, there is $C> 0$ independent of $j$ such that $\big[ \,
\sum_{\substack{k: \vert k-j\vert= m}}1 \big]=C$. Hence
\begin{align*}
\sum_{\substack{k=1 \\ \vert k-j\vert\geq l}}^{n} 
\sigma_{n,j} \sigma_{n,k}
\leq 
C \sum_{m=l}^{\infty}
{1\over r^{m}} = u_l.
\end{align*}
Since $\sum_{m=0}^{\infty}{r^{-m}}=r(r-1)^{-1}<\infty$ for $r>1$, it follows that $u_l\to 0$ when $l\to \infty$.
Therefore, \eqref{Assumptions-2} follows.
\end{remark}

\section{Numerical evaluation}\label{sec:simulation}
In this section, a Monte Carlo simulation study was carried out to evaluate the performance of the 
maximum likelihood estimators of the BN model; see Section \ref{sec:asymptot}. All numerical evaluations were done in
the \texttt{R} software; see \cite{r:18}. 

The simulation scenario considers sample size $n \in \{10,75,250,600\}$, location parameter $\mu=\{0.5\}$, scale parameter $\sigma=\{1.0\}$, location parameter $\alpha \in \{-2.0,-0.5,0.8,3.0\}$, with 1,000 Monte Carlo replications for each combination of above given parameters and sample size. The values of the location parameter $\alpha$ have been chosen in order to study the performance under uni- and bimodality.

The maximum likelihood estimation results for the considered BN model are presented in Figure \ref{fig:estimates}. The empirical bias and root mean squared error (MSE) are reported. A look at the results in in Figure \ref{fig:estimates} allows us to conclude that, as the sample size increases, the empirical bias and RMSE both decrease, as expected. Moreover, we note that the performance of the estimate of $\mu$ is better when $|\alpha|>1$, namely, under bimodality.

\begin{figure}[!ht]
\centering
\subfigure[$\alpha=-2.0$]{\includegraphics[height=5.5cm,width=5.5cm]{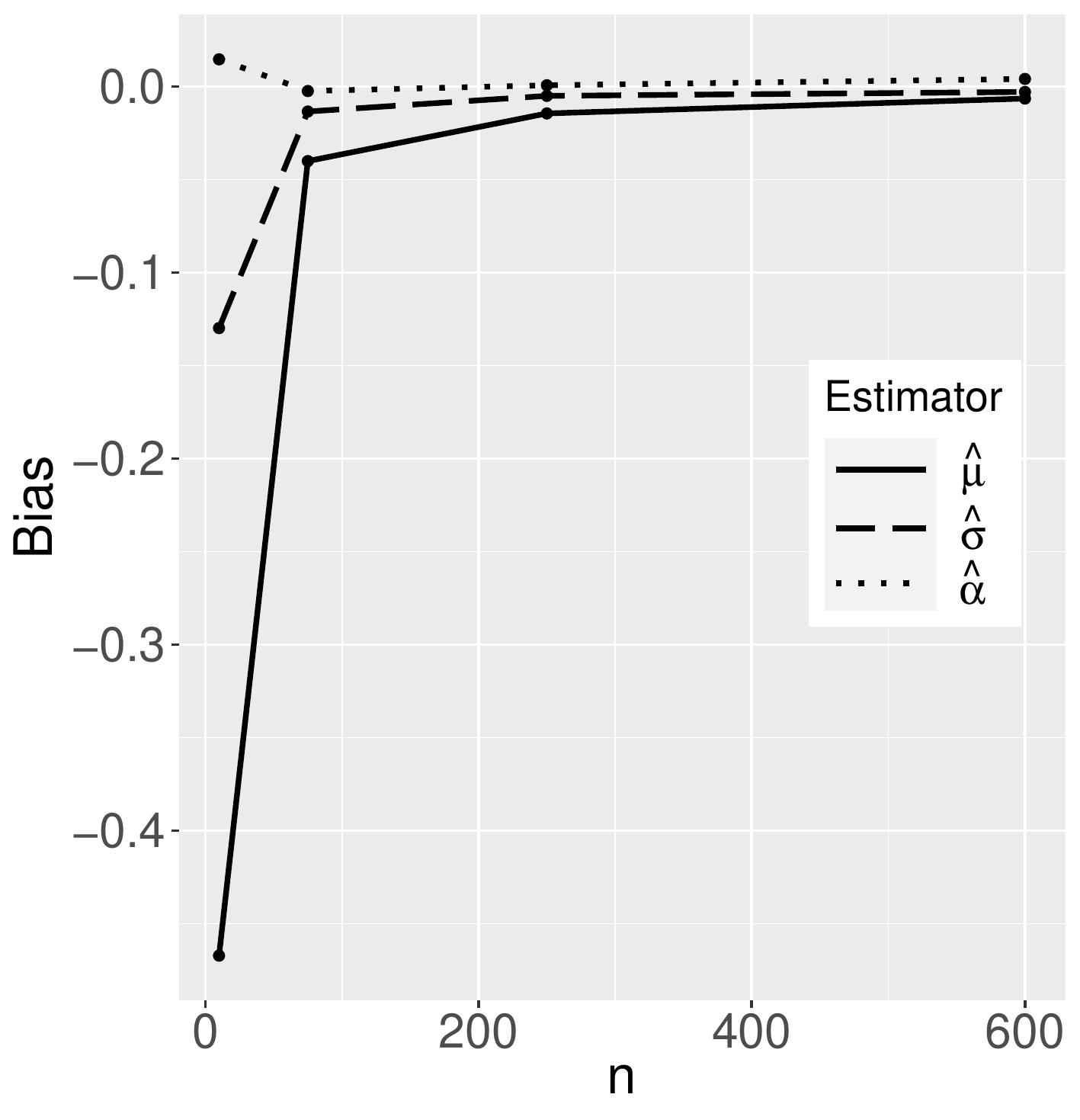}}
\subfigure[$\alpha=-2.0$]{\includegraphics[height=5.5cm,width=5.5cm]{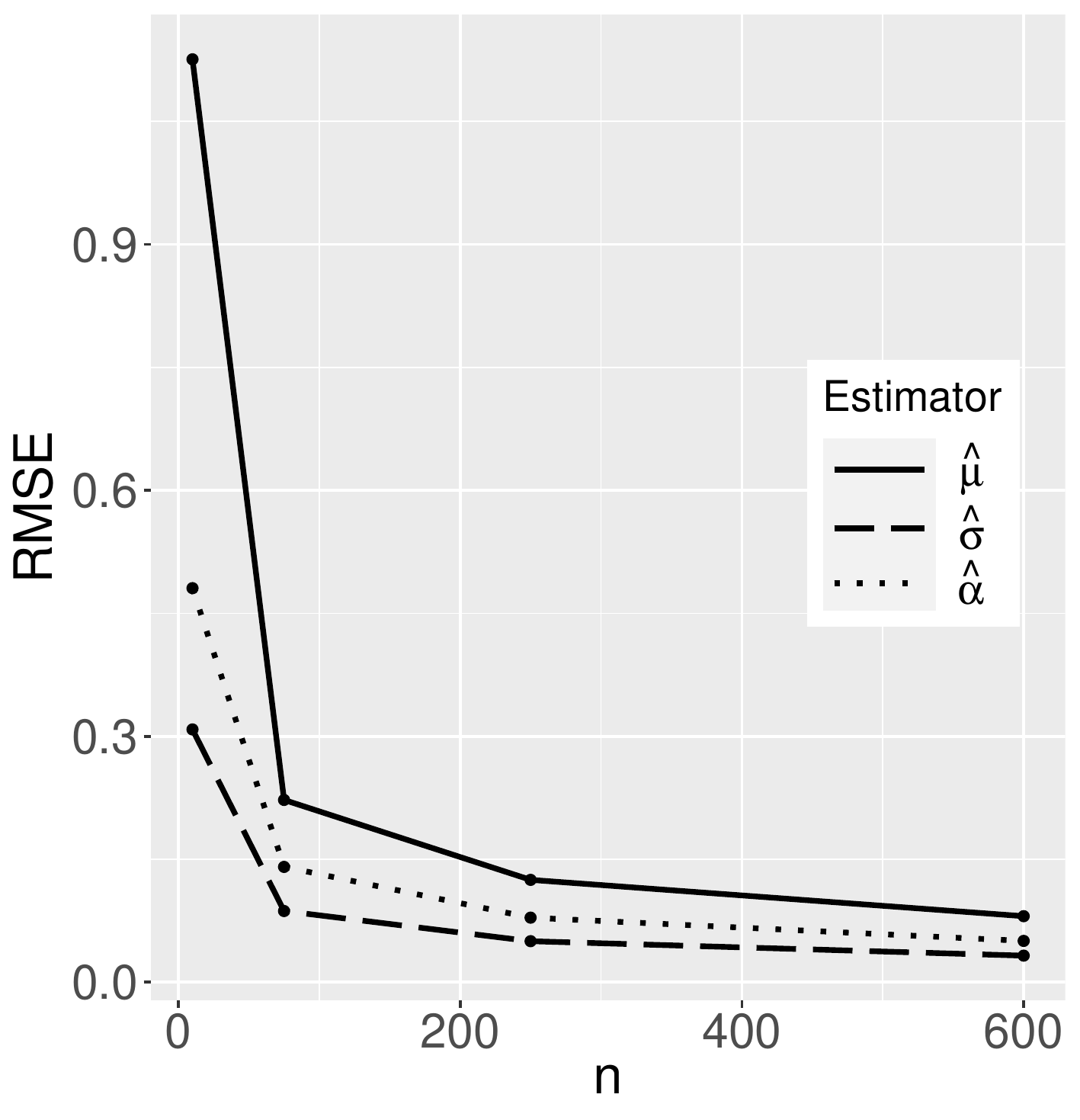}}
\subfigure[$\alpha=-0.5$]{\includegraphics[height=5.5cm,width=5.5cm]{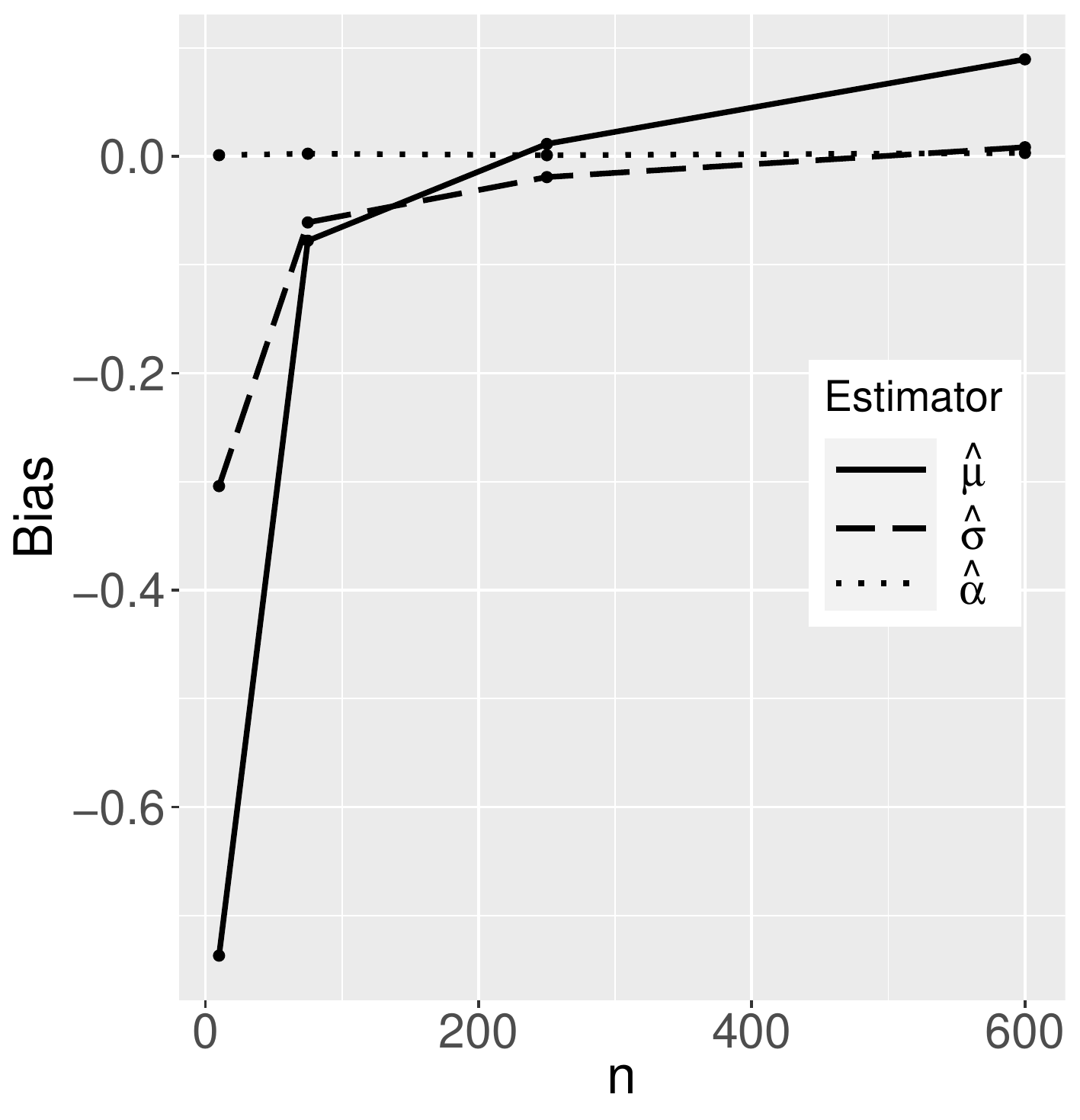}}\\
\subfigure[$\alpha=-0.5$]{\includegraphics[height=5.5cm,width=5.5cm]{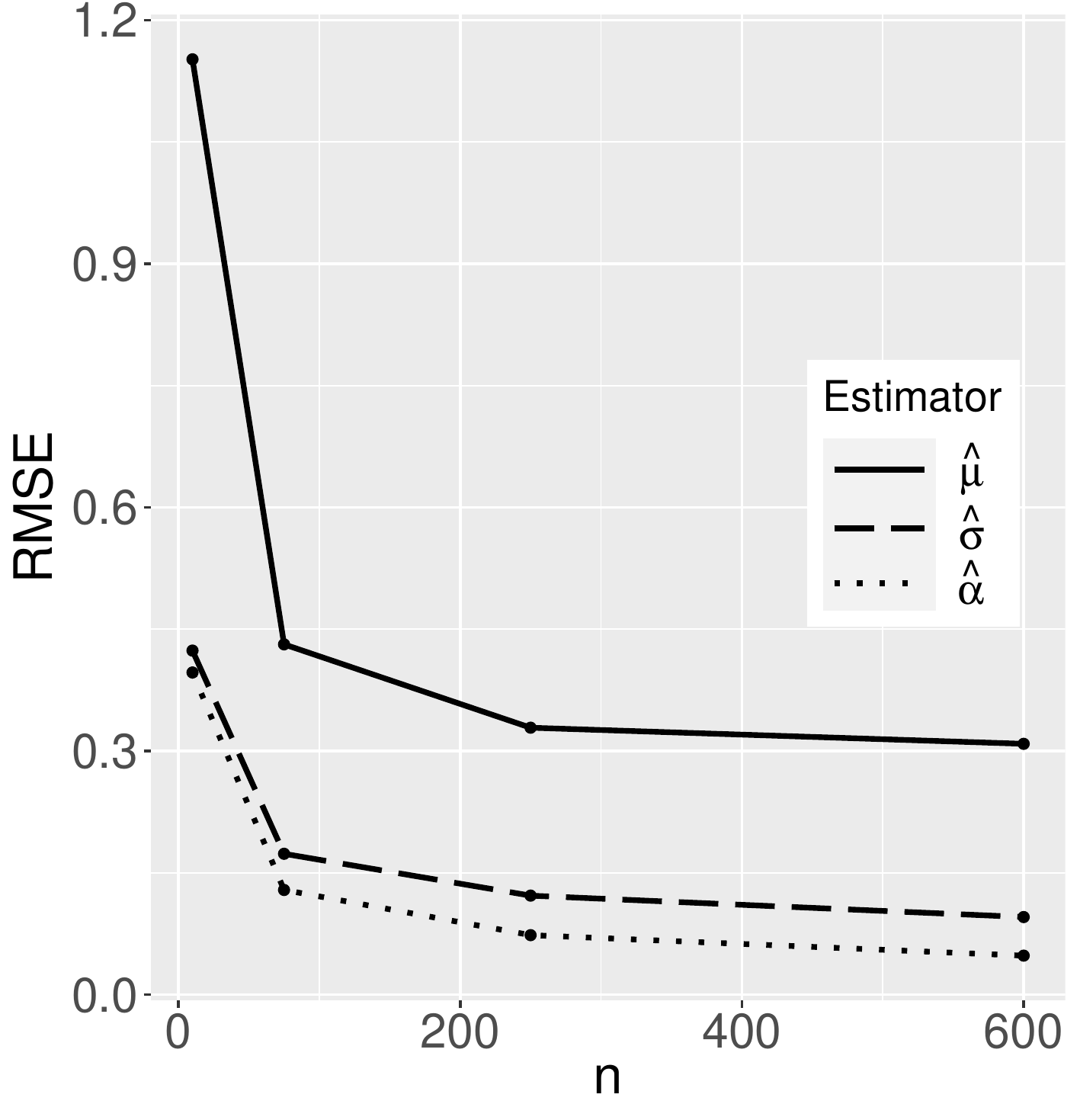}}
\subfigure[$\alpha=0.8$]{\includegraphics[height=5.5cm,width=5.5cm]{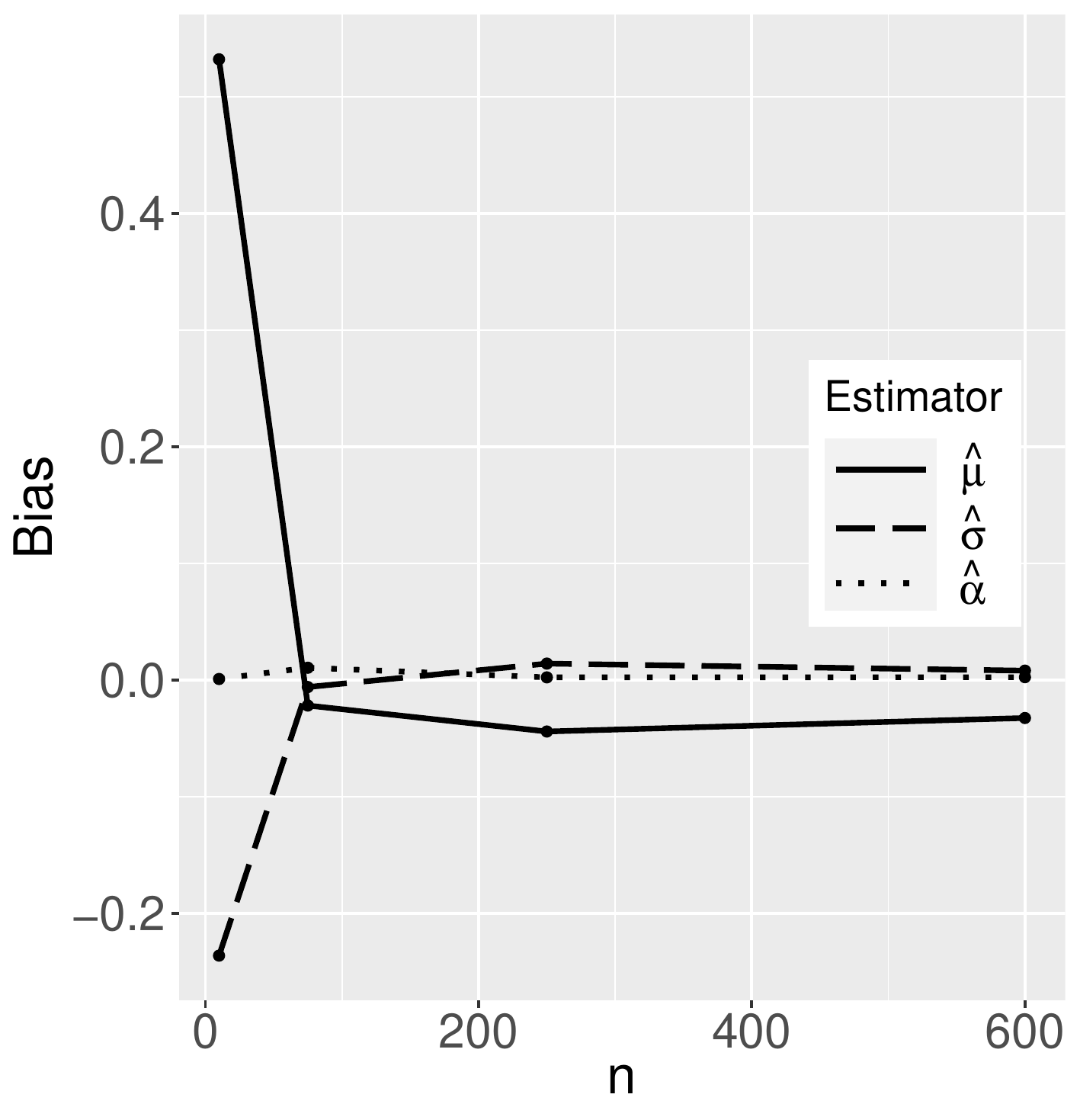}}\\
\subfigure[$\alpha=0.8$]{\includegraphics[height=5.5cm,width=5.5cm]{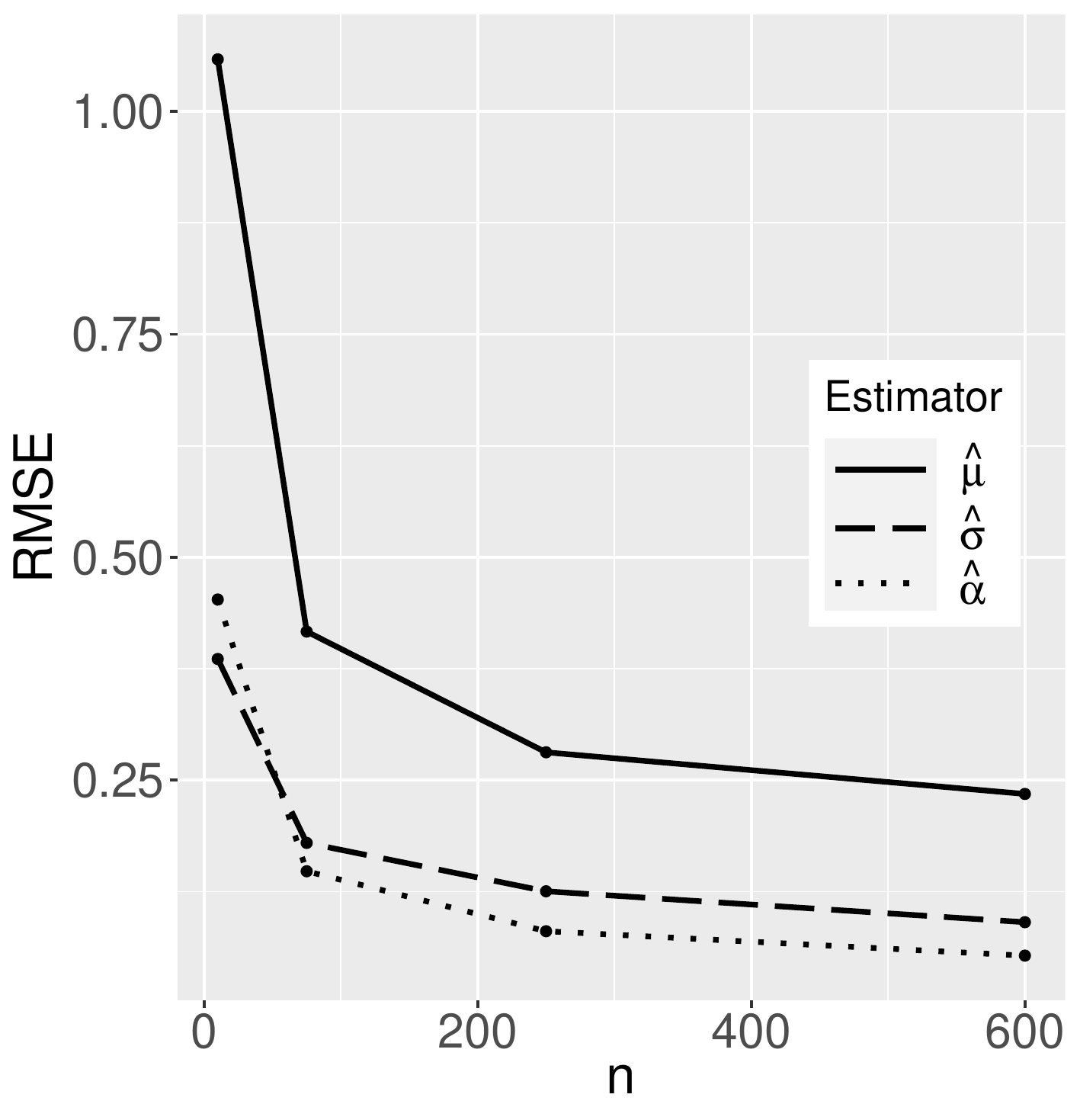}}
\subfigure[$\alpha=3.0$]{\includegraphics[height=5.5cm,width=5.5cm]{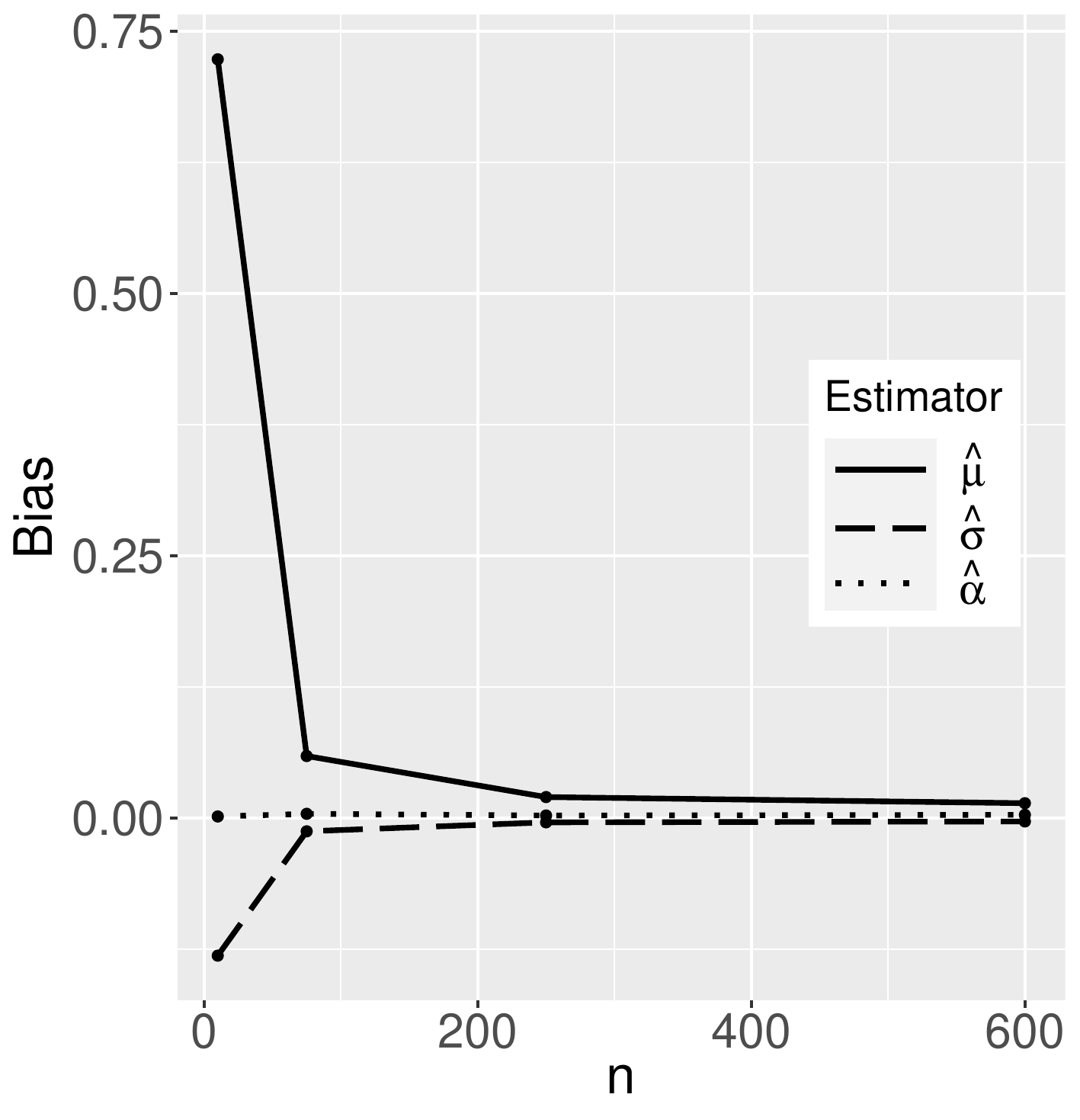}}
\subfigure[$\alpha=3.0$]{\includegraphics[height=5.5cm,width=5.5cm]{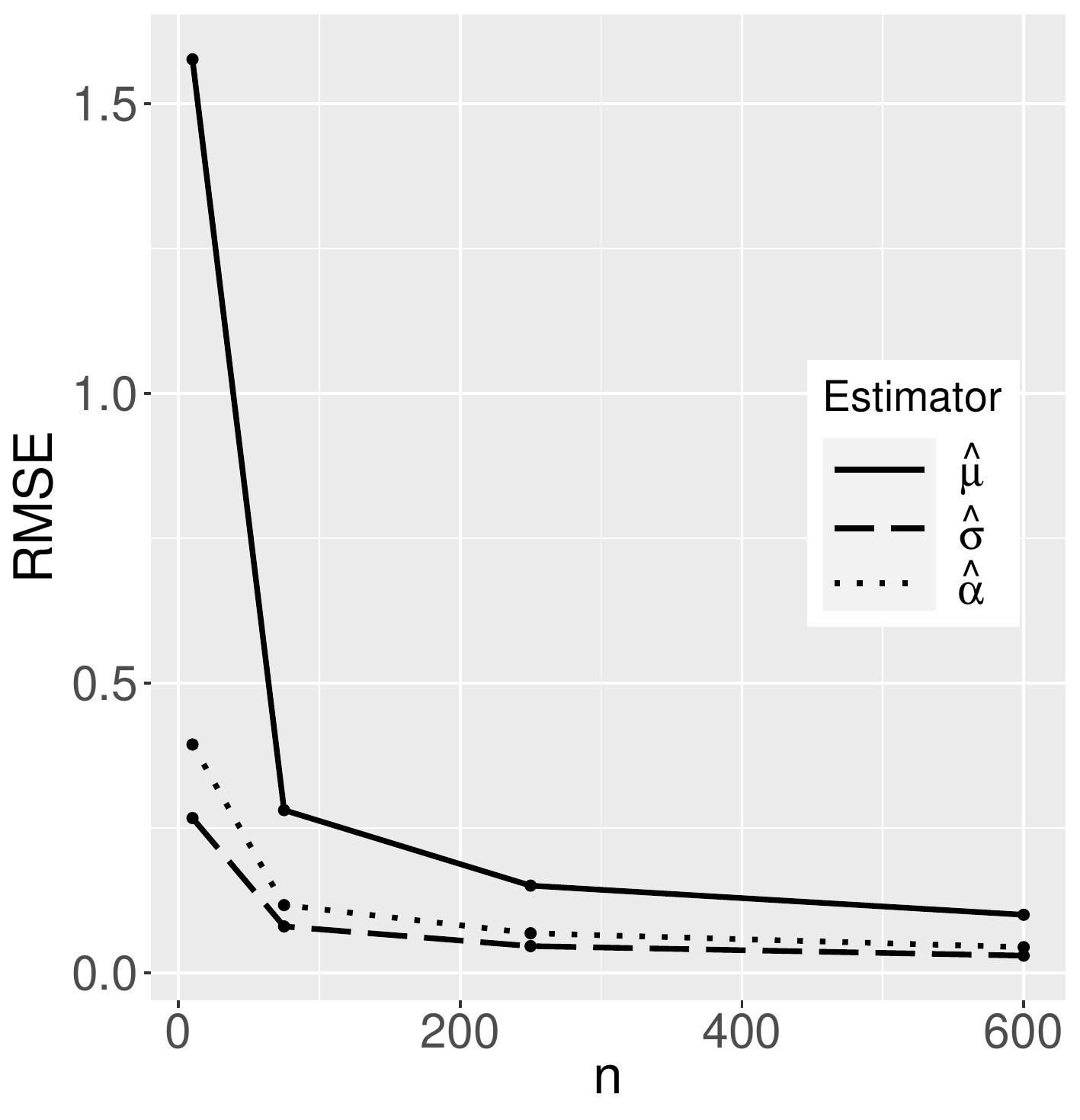}}\\
 \caption{\small {Empirical bias and RMSE from simulated data for the indicated maximum likelihood estimates of the BN 
model parameters, $n$ and $\alpha$.}}
\label{fig:estimates}
\end{figure}

\section{Concluding remarks}\label{sec:concluremarks}
We have derived novel properties of the bimodal normal distribution. We have discussed some mathematical properties, proof for the bimodality and identifiability. We have also discussed some aspects related to the maximum likelihood estimation as well as associated asymptotic properties. We have derived a bivariate version of the bimodal normal distribution and analyzed some characteristics such as covariance and correlation. We have studied stationarity and ergodicity and a triangular array central limit theorem. Finally, we have carried out Monte Carlo simulations to evaluate the behaviour of the maximum likelihood estimates.

\clearpage

\paragraph{Acknowledgements}
The authors thank CNPq for the financial support. This study was financed in part by the Coordenação de Aperfeiçoamento de Pessoal de Nível Superior - Brasil (CAPES) (Finance Code 001).

\paragraph{Disclosure statement}
There are no conflicts of interest to disclose.


\bibliographystyle{apalike}

\begin{thebibliography}{}
%
\bibitem[Azzalini and Bowman(1990)]{ab:90}
{Azzalini, A. and Bowman, A. W.} (1990) 
A look at some data on the Old Faithful geyser. 
\emph{Applied Statistics}, {39}, 357--365. 
%
\bibitem[Bahadur(1971)]{baj:71}
Bahadur, R. R. (1971). 
\emph{Some Limit Theorems in Statistics}. 
SIAM, Philadelphia.

\bibitem[Cox and Grimmett (1984)]{cox:02}
Cox, J. T., and  Grimmett, G. (1984).
\emph{Central Limit Theorems for Associated Random Variables and the Percolation Model}.
Ann. Probab. 12(2): 514-528. 

\bibitem[Cramér(1946)]{cra:46}
Cramér, H. (1946). 
\emph{Mathematical methods of statistics}. 
NJ, US: Princeton University Press.

\bibitem[Eugene et al.(2002)]{eugeneetal:02}
Eugene, N., Lee, C., and Famoye, F. (2002). 
\emph{Beta-normal distribution and its applications}. 
Communications in Statistics - Theory and Methods, 31, 497-512.

\bibitem[G\'omez-D\'eniz et al.(2021)]{gomezdenizetal:21}
G\'omez-D\'eniz, E., Sarabia, J. M., and Calderín-Ojeda, E. (2021). 
\emph{Bimodal normal distribution: Extensions and applications}. 
Journal of Computational and Applied Mathematics, 388, 113292.

\bibitem[Hassan and El-Bassiouni(2016)]{hassanelbassiouni:16}
Hassan, M.Y. and El-Bassiouni, M.Y. (2016). 
\emph{Bimodal skew-symmetric normal distribution}. 
Communications in Statistics - Theory and Methods, 45, 1527-1541.


\bibitem[R-Team, 2020]{r:18}
R-Team (2020).
\newblock {\em {R: A Language and Environment for Statistical Computing}}.
\newblock R Foundation for Statistical Computing, Vienna, Austria.

\bibitem[Teicher(1963)]{Teicher:63}
Teicher, H. (1963). 
\emph{Identifiability of Finite Mixtures}. 
The Annals of Mathematical Statistics, 34(4), 1265-1269.

\bibitem[Vila et al.(2020)]{vilaetal:20}
Vila, R., Le\~ao, J., Saulo, H., Shahzad, M. N., and Santos-Neto, M. (2020). 
\emph{On a bimodal Birnbaum-Saunders distribution with applications to lifetime data}. 
Brazilian Journal of Probability and Statistics, 34, 495-518.

\bibitem[Winkelbauer(2014)]{Win:14}
Winkelbauer, A. (2014). 
\emph{Moments and absolute moments of the normal distribution}.
Preprint. ArXiv:1209.4340.

 
\end{thebibliography}

\small

\end{document}